\newtheorem{thm}{Theorem}
\newtheorem{corollary}{Corollary}
\newtheorem{lemma}{Lemma}
\title[]{Randomized Near Neighbor Graphs, Giant Components and Applications in Data Science}
\author[]{George C. Linderman}
\address{Program in Applied Mathematics, Yale University}
\email{george.linderman@yale.edu}
\author[]{Gal Mishne}
\address{Program in Applied Mathematics, Yale University}
\email{gal.mishne@yale.edu}
\author[]{Yuval Kluger}
\address{Department of Pathology and Program in Applied Mathematics, Yale University}
\email{yuval.kluger@yale.edu}
\author[]{Stefan Steinerberger}
\address{Department of Mathematics, Yale University}
\email{stefan.steinerberger@yale.edu}
\begin{document}
\begin{abstract} If we pick $n$ random points uniformly in $[0,1]^d$ and connect each point to its $k-$nearest neighbors, then it is well known that there exists a giant connected component with high probability. We prove that in $[0,1]^d$ it suffices to connect every point to $ c_{d,1} \log{\log{n}}$ points
chosen randomly among its $ c_{d,2} \log{n}-$nearest neighbors to ensure a giant component of size $n - o(n)$ with high probability. This construction yields a much sparser random graph with $\sim n \log\log{n}$ instead of $\sim n \log{n}$ edges that has comparable connectivity properties. This result has nontrivial implications for problems in data science where an affinity matrix is constructed: instead of picking the $k-$nearest neighbors,
one can often pick $k'  \ll k$ random points out of the $k-$nearest neighbors without sacrificing efficiency. This can massively simplify and accelerate computation, we illustrate this with several numerical examples.
\end{abstract}
\subjclass[2010]{05C40, 05C80, 60C05, 60D05 (primary), 60K35, 82B43 (secondary)}
\keywords{$k-$nearest neighbor graph, random graph, connectivity, sparsification.}

\maketitle

\section{Introduction and Main Results}
\subsection{Introduction.}  The following problem is classical (we refer to the book of Penrose \cite{penrose} and references therein).
\begin{quote}
Suppose $n$ points are randomly chosen in $[0,1]^2$ and we connect every point to its $k-$nearest neighbors, what is the likelihood of obtaining a connected graph? 
\end{quote}
It is not very difficult to see that $k \sim \log{n}$ is the right order of magnitude. Arguments for both directions are sketched  in the first section of a paper by Balister, Bollob\'{a}s, Sarkar \& Walters \cite{bal}.
Establishing precise results is more challenging; the same paper shows that $k \leq 0.304 \log{n}$ leads to a disconnected graph and $k \geq 0.514 \log{n}$ leads to a connected graph with probabilities going to 1 as $n \rightarrow \infty$. We refer to \cite{bal2, bal3, falvas, walters, xue} for other recent developments.

\begin{center}
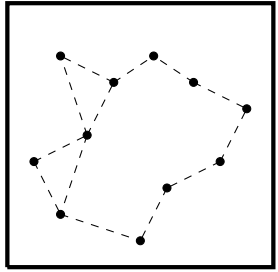
\begin{figure}[h!]
\begin{tikzpicture}[scale=3.5]
\draw [ultra thick] (0,0) -- (0,1) -- (1,1) -- (1,0) -- (0,0);
\filldraw (0.1, 0.4) circle (0.015cm);
\filldraw (0.2, 0.2) circle (0.015cm);
\filldraw (0.2, 0.8) circle (0.015cm);
\filldraw (0.3, 0.5) circle (0.015cm);
\filldraw (0.4, 0.7) circle (0.015cm);
\filldraw (0.5, 0.1) circle (0.015cm);
\filldraw (0.55, 0.8) circle (0.015cm);
\filldraw (0.8, 0.4) circle (0.015cm);
\filldraw (0.7, 0.7) circle (0.015cm);
\filldraw (0.6, 0.3) circle (0.015cm);
\filldraw (0.9, 0.6) circle (0.015cm);
\draw [dashed] (0.1, 0.4) -- (0.3, 0.5);
\draw [dashed] (0.1, 0.4) -- (0.2, 0.2);
\draw [dashed] (0.3, 0.5) -- (0.2, 0.2);
\draw [dashed] (0.3, 0.5) -- (0.4, 0.7);
\draw [dashed] (0.4, 0.7) -- (0.2, 0.8);
\draw [dashed] (0.4, 0.7) -- (0.55, 0.8);
\draw [dashed] (0.3, 0.5) -- (0.2, 0.8);
\draw [dashed] (0.7, 0.7) -- (0.55, 0.8);
\draw [dashed] (0.7, 0.7) -- (0.9, 0.6);
\draw [dashed] (0.8, 0.4) -- (0.9, 0.6);
\draw [dashed] (0.8, 0.4) -- (0.6, 0.3);
\draw [dashed] (0.6, 0.3) -- (0.5, 0.1);
\draw [dashed] (0.2, 0.2) -- (0.5, 0.1);
\end{tikzpicture}
\caption{Random points, every point is connected to its $2-$nearest neighbors.}
\end{figure}
\end{center}

We contrast this problem with one that is encountered on a daily basis in applications.
\begin{quote}
 Suppose $n$ points are randomly sampled from a set with some geometric structure (say, a submanifold in high dimensions); how should one create edges between these vertices to best reflect the underlying geometric
structure?  
\end{quote}
This is an absolutely fundamental problem in data science: data is usually
represented as points in high dimensions and for many applications one creates
an \textit{affinity matrix} that may be considered an estimate on how `close'
two elements in the data set are; equivalently, this corresponds to building a
weighted graph with data points as vertices.  Taking the $k-$nearest neighbors
is a standard practice in the field (see e.g. \cite{belkin, ulli, singer}) and will
undoubtedly preserve locality. Points are only connected to nearby points and
this gives rise to graphs that reflect the overall structure of the
underlying geometry.  The main point of our paper is that this approach, while
correct at the local geometric perspective,  is often not optimal for how
it is used in applications. We first discuss the main results from a purely
mathematical perspective and then explain what this implies for applications.

\subsection{$k-$nearest Neighbors.} We now explore what happens if $k$ is fixed and $n \rightarrow \infty$. More precisely, the question being treated in this section is as follows.
\begin{quote}
 Suppose $n$ points are randomly sampled from a nice (compactly supported, absolutely continuous) probability distribution and every point is connected to its $k-$nearest neighbors. What can be said about the number of connected components as $n \rightarrow \infty$? How do these connected components behave?
\end{quote}
The results cited above already imply that the arising graph is disconnected with very high likelihood. We aim to answer these questions in more detail. By a standard reduction (a consequence of everything being local, see e.g. Beardwood, Halton \& Hammersley \cite{beardwood}), it suffices to
study the case of uniformly distributed points on $[0,1]^d$.
More precisely, we will study the behavior of random graphs generated in the
following manner: a random sample from a Poisson process of intensity $n$ on
$[0,1]^d$ yields a number of uniformly distributed points (roughly $n \pm
\sqrt{n}$), and we connect each of these points to its $k-$nearest neighbors,
where $k \in \mathbb{N}$ is fixed. 

\begin{thm}[There are many connected components.] Let $X_n$ denote the number of connected components of a graph obtained from connecting point samples from a Poisson process with intensity $n$ to their $k-$nearest neighbors. There exists a constant $c_{d, k} > 0$ such that
$$ \lim_{n \rightarrow \infty}{ \frac{\mathbb{E} X_n}{n}} = c_{d, k}.$$
Moreover, the expected diameter of a connected component is $\lesssim_{d,k} n^{-\frac{1}{d}}$.
\end{thm}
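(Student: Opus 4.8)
The plan is to exploit that the $k$-nearest-neighbor graph is a \emph{local} object, so that the number of connected components is, up to lower-order boundary corrections, an additive functional of disjoint pieces of the point cloud. Rescaling $[0,1]^d$ by $\ell:=n^{1/d}$ turns the intensity-$n$ Poisson process $\mathcal P$ into a unit-intensity Poisson process on the cube $Q_\ell:=[0,\ell]^d$, and since the $k$-NN graph is scale invariant it is equivalent to study $C(Q_\ell)$, the number of components of the $k$-NN graph of a unit-intensity process on $Q_\ell$ (with nearest neighbors computed inside the cube), and to prove $\mathbb{E}\,C(Q_\ell)/\ell^{d}\to c_{d,k}$. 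I would establish an almost-superadditivity estimate: partition $Q_\ell$ into $m^{d}$ congruent subcubes and compare the full $k$-NN graph with the disjoint union of the $k$-NN graphs of the individual subcubes; they have the same vertices and differ only in edges incident to points lying within their own $k$-th-nearest-neighbor distance $R_k(x)$ of some subcube face. Because in a unit-intensity Poisson process $\mathbb{P}(R_k>r)$ decays super-exponentially (in particular $\mathbb{E}R_k<\infty$), the expected number of such ``boundary-affected'' points is $\lesssim_{d,k} m\,\ell^{d-1}$, and since adding or deleting a single edge changes the number of components by at most one,
\[
\bigl|\,\mathbb{E}\,C(Q_\ell)-m^{d}\,\mathbb{E}\,C(Q_{\ell/m})\,\bigr|\ \lesssim_{d,k}\ m\,\ell^{d-1}.
\]
Writing $a_\ell:=\mathbb{E}\,C(Q_\ell)/\ell^{d}$ (bounded, since $C\le\#\mathcal P$), dividing by $\ell^{d}$ gives $|a_\ell-a_{\ell/m}|\lesssim m/\ell$, and comparing $Q_\ell$ with a tiling by cubes of a fixed side $s$ gives $|a_\ell-a_s|\lesssim 1/s+s/\ell$; hence $(a_\ell)$ is Cauchy as $\ell\to\infty$ and converges to some $c_{d,k}\ge 0$. (The outer boundary of $Q_\ell$ affects only $O(\ell^{d-1})=o(\ell^{d})$ points, so it does not change the limit; the same localization identifies $c_{d,k}=\mathbb{E}\bigl[\,1/|C_\infty(0)|\,\bigr]$, the reciprocal expected cluster size of the origin in the $k$-NN graph of a unit-intensity Poisson process on $\mathbb{R}^{d}$, with $1/\infty:=0$.)

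For positivity, $c_{d,k}>0$, it suffices to exhibit a positive density of points lying in their own small isolated component. Fix a small $\rho>0$ and a large constant $R=R(d,k)$ and, for a point $x$, consider the event that $B(x,\rho)$ contains exactly $k+1$ points of $\mathcal P$ while $B(x,R)\setminus B(x,\rho)$ is empty; this has a fixed positive probability, and on it those $k+1$ points form a $k$-NN clique. This cluster fails to be a full component only if some $y$ with $|y-x|>R$ satisfies $R_k(y)\ge|y-x|-\rho$, i.e.\ sits in an atypically large void that reaches the cluster; the expected number of such $y$ is at most $\sum_{D\ge R}D^{d-1}e^{-cD^{d}}$, which tends to $0$ as $R\to\infty$, so $R$ may be chosen so that the conditional isolation probability exceeds $1/2$. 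The Mecke formula then gives $\mathbb{E}X_n\ge\tfrac{\delta}{k+1}\bigl(n-O(n^{1-1/d})\bigr)$ for some $\delta=\delta(d,k)>0$, whence $c_{d,k}\ge\delta/(k+1)>0$.

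For the diameter bound, note that every edge $\{x,y\}$ of the $k$-NN graph satisfies $|x-y|\le R_k(x)+R_k(y)$, so estimating along a simple path realizing the diameter of a component $C$ gives $\mathrm{diam}(C)\le 2\sum_{x\in C}R_k(x)$, and summing over all components $\sum_{C}\mathrm{diam}(C)\le 2\sum_{x\in\mathcal P}R_k(x)$. Campbell's formula bounds the expectation of the right-hand side by $O(\ell^{d})=O(n)$ in the rescaled picture, and undoing the scaling (diameters shrink by $n^{-1/d}$) yields $\mathbb{E}\sum_{C}\mathrm{diam}(C)\lesssim_{d,k}n^{1-1/d}$; this estimate is wasteful for a possible giant component but still valid, and together with $\mathbb{E}X_n\asymp_{d,k}n$ it shows that the expected diameter of a uniformly chosen component is $\lesssim_{d,k}n^{-1/d}$ — the passage from $\mathbb{E}\sum_C\mathrm{diam}(C)/\mathbb{E}X_n$ to $\mathbb{E}[\,\mathrm{diam}\text{ of a random component}\,]$ being routine once one knows that $X_n$ concentrates around $c_{d,k}n$, which follows from standard bounded-difference arguments for local functionals of Poisson processes.

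The main obstacle, in my view, is not any one inequality but the localization bookkeeping: one must pin down precisely which points are ``boundary-affected'' at the subcube faces, and one must rule out that a far-away void reaches the isolated cluster in the positivity argument, in each case keeping the error strictly $o(\mathrm{volume})$. Once the super-exponential decay of $\mathbb{P}(R_k>r)$ is available these are careful but mechanical estimates, and they are the only place where any feature of the $k$-NN graph beyond ``edges are short'' is used.
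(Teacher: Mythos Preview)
Your proposal is correct and takes a genuinely different route from the paper's. Both rest on localization---the component count is nearly additive over spatial pieces---but the mechanisms differ. For the limit, the paper argues by contradiction: it proves a deterministic degree bound (every vertex in any $k$-NN graph in $\mathbb{R}^d$ has degree $\le c_dk$), deduces $\mathbb{E}X_{n+1}\le\mathbb{E}X_n+c_dk$ so that $\mathbb{E}X_n/n$ cannot jump upward, and combines this with a ``Separation Lemma'' in which cubes are buffered by thin corridors whose edge-crossings are controlled by a second deterministic estimate $\sum_{\text{knn}}\|x_i-x_j\|\lesssim_{d,k} n^{(d-1)/d}$ valid for \emph{any} point set. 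Your argument is the more standard near-additivity route: bound the symmetric difference of edge sets between the full graph and the disjoint subcube graphs by the number of points with $R_k(x)>\mathrm{dist}(x,\partial S_x)$, whose expectation is $O(\text{surface area})$ since $\mathbb{E}R_k<\infty$, and conclude $(a_\ell)$ is Cauchy. The paper's approach buys deterministic, configuration-independent lemmas; yours is shorter, fits the subadditive Euclidean functional template directly, and yields the Palm identity $c_{d,k}=\mathbb{E}[1/|C_\infty(0)|]$ that the paper does not state. One small point worth making explicit: to go from ``$O(m\ell^{d-1})$ boundary-affected points'' to ``$O(m\ell^{d-1})$ differing edges'' you need each vertex to have degree $O_{d,k}(1)$, which is precisely the paper's degree lemma. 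For the diameter, both bound $\sum_C\mathrm{diam}(C)$ by the total $k$-NN edge length; the paper via its deterministic lemma, you via Campbell and $\mathbb{E}R_k<\infty$. Your positivity argument is in fact more careful than the paper's: the paper places points in a small ball with an empty annulus of fixed width and calls it a cluster, tacitly ignoring that a distant $y$ sitting in a large void might still have one of those points as a $k$-nearest neighbor; you close that gap with the tail of $R_k$.
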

 In terms of number of clusters, this is the worst possible behavior:
the number of clusters is comparable to the number of points.  The reason why
this problem is not an issue in applications is that the implicit
constant $c_{d,k}$ decays quickly in both parameters (see Table
\ref{fig:decay}). The second part of the statement is also rather striking: a
typical cluster lives essentially at the scale of nearest neighbor distances;
again, one would usually expect this to be a noticeable concern but in practice
the implicit constant in $\mathbb{E}~ \mbox{diam} \lesssim_{d,k} n^{-1/d}$ is
growing extremely rapidly in both parameters. It could be of interest to derive
some explicit bounds on the growth of these constants.  Our approach could be
used to obtain some quantitative statements but they are likely far
from the truth. 
\begin{table}[h!]  \begin{center}
\begin{tabular}{ l c c r }
  $k \setminus d$ & $2$ & 3 & 4\\
  $2$ & 0.049  & 0.013  & 0.0061 \\
  3 &  0.0021 & 0.00032  & 0.000089 \\
  4 & 0.00011 & 0.0000089 & 0.0000014 \\
\end{tabular}
\end{center}
\caption{Monte-Carlo estimates for the value of $c_{d,k}$ (e.g. $k=2$ nearest neighbors in $[0,1]^2$ yield roughly $\sim 0.049n$ clusters). Larger values are difficult to obtain via sampling because $c_{d,k}$ decays very rapidly.}
\label{fig:decay}

\end{table}

 We emphasize that this is a statement about the typical clusters and there are usually clusters that have very large diameter -- this is also what turns $k-$nearest neighbor graphs into a valuable tool in practice: usually, one obtains a giant connected component. Results in this direction were established by  Balister \& Bollobas \cite{balp} and Teng \& Yao \cite{teng}: more precisely, in dimension 2 the $11-$nearest neighbor graph percolates (it is believed that 11 can be replaced by 3, see \cite{balp}).

\subsection{Randomized Near Neighbors.}  Summarizing the previous sections, it
is clear that if we are given points in $[0,1]^d$ coming from a Poisson process
with intensity $n$, then the associated $k-$nearest neighbor graph will have
$\sim n$ connected components for $k$ fixed (as $n \rightarrow \infty$) and
will be connected with high likelihood as soon as $k \gtrsim c \log{n}$. The
main contribution of our paper is to show that there is a much sparser random
construction that has better connectivity properties -- this is
of intrinsic interest but has also a number of remarkable applications in
practice (and, indeed, was inspired by those).

\begin{thm} There exist constants $c_{d,1}, c_{d,2} >0$, depending only on the dimension, such that if we connect every one of $n$ points, i.i.d. uniformly sampled from $[0,1]^d$, 
$$ \mbox{to each of its}~ c_{d,1} \log{n}~\mbox{ nearest neighbors with likelihood}~ p = \frac{c_{d,2} \log\log{n}}{ \log{n}},$$
then the arising graph has a connected component of size $n-o(n)$ with high probability. 
\end{thm}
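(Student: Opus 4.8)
My plan is a renormalization (block) argument that reduces the statement to two classical facts: the connectivity threshold for Erd\H{o}s--R\'enyi random graphs, and the fact that supercritical Bernoulli site percolation on $\mathbb{Z}^d$ with $d\ge 2$ possesses a giant cluster whose density tends to $1$ as the defect probability tends to $0$. I will describe the case $d\ge 2$; for $d=1$ one gets a fully connected graph with high probability, because a gap between two consecutive order statistics can disconnect the graph only if all of the $\asymp(\log n)^2$ candidate edges straddling it within range $c_{d,1}\log n$ are deleted, an event of probability $n^{-\Omega(\log\log n)}$, which survives a union bound over the $\sim n$ gaps.

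First I would tile $[0,1]^d$ by $N\asymp n/(C\log n)$ congruent subcubes of side $\ell=(C\log n/n)^{1/d}$, where $C=C(d)$ is a large constant fixed at the end. Chernoff bounds plus a union bound over the $N\le n$ cubes show that, with probability $1-o(1)$, every cube carries between $\tfrac12 C\log n$ and $\tfrac32 C\log n$ of the points and, simultaneously, every point has at most $2\omega_d(2\sqrt d)^d\,C\log n$ other points within distance $2\sqrt d\,\ell$ of it (where $\omega_d$ is the volume of the unit ball; here $C$ is taken large enough that the Chernoff exponents beat the number of trials). Setting $c_{d,1}:=2\omega_d(2\sqrt d)^d C$, this event guarantees that for every point $x$, every point in the cube of $x$ or in a cube touching it --- even only at a corner --- is within distance $2\sqrt d\,\ell$ of $x$ and hence, there being at most $c_{d,1}\log n$ points within that radius, lies among the $c_{d,1}\log n$ nearest neighbors of $x$. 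In particular every pair of points sharing a cube, and every pair in two adjacent cubes, is eligible to be joined by an edge.

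Conditionally on the point locations (which I henceforth take to realize the event above), the retention coin flips are i.i.d.\ Bernoulli$(p)$. The induced subgraph on the $m_i\asymp\log n$ points of a cube $Q_i$ therefore stochastically dominates $G(m_i,p)$, and these dominating graphs are mutually independent across cubes, being functions of disjoint blocks of coin flips. Since $m_i p\ge\tfrac12 C c_{d,2}\log\log n$ while $\log m_i=\log\log n+O(1)$, choosing $c_{d,2}$ with $C c_{d,2}$ large places $p$ above the connectivity threshold of $G(m_i,p)$, so $Q_i$ is \emph{good} --- its induced subgraph is connected --- with probability $1-\beta_n$, where $\beta_n\le(\log n)^{-\delta}\to 0$ for a fixed $\delta>0$. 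Likewise, two adjacent cubes fail to have any edge between them with probability at most $(1-p)^{\Omega((\log n)^2)}=n^{-\Omega(\log\log n)}$, so a union bound over the $O(n)$ adjacent pairs gives that, with probability $1-o(1)$, every pair of adjacent good cubes is joined by an edge. On the intersection of these events, all points lying in the cubes of one connected cluster of good cubes belong to a single connected component of the subsampled graph.

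The final step is the main obstacle. Regarding ``good'' as ``open'', the cubes form a Bernoulli site percolation configuration on the grid $\{1,\dots,N^{1/d}\}^d$ with open density $1-\beta_n\to 1$; since $d\ge 2$, a classical fact about supercritical percolation (see e.g.\ \cite{penrose}) yields that, with probability $1-o(1)$, all but $o(N)$ of the cubes lie in a single cluster of good cubes. Hence the number of sample points \emph{outside} the corresponding connected component of the subsampled graph is at most (number of cubes not in that cluster) $\times\max_i m_i=o(N)\cdot O(\log n)=o(n)$, so the graph has a component of size $n-o(n)$; being larger than $n/2$, it is the unique largest one. The percolation input is exactly what forces $d\ge 2$; everything else is concentration of measure and the Erd\H{o}s--R\'enyi threshold. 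The only other thing to check is that $C$, $c_{d,1}$, $c_{d,2}$ can be chosen consistently so that a single cube simultaneously holds $\Theta(\log n)$ points, has (together with its neighbors) at most $c_{d,1}\log n$ points in each neighborhood ball, and lies above the $G(m,p)$ connectivity threshold --- all three are arranged by taking $C$ large, then $c_{d,1}$ proportional to $C$, then $c_{d,2}$ a suitable constant depending only on $d$.
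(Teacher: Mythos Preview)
Your proposal is correct and follows essentially the same three-step block argument as the paper: partition $[0,1]^d$ into $\sim n/\log n$ cubes each carrying $\Theta(\log n)$ points (via Chernoff), use the Erd\H{o}s--R\'enyi connectivity threshold to make each cube internally connected with failure probability $\le(\log n)^{-\delta}$, show adjacent cubes are joined with overwhelming probability, and finish with a percolation statement on the grid of cubes. The only cosmetic difference is that you invoke classical supercritical site percolation for the last step, whereas the paper proves its own self-contained ``mini-percolation lemma'' (removing grid vertices with probability $(\log n)^{-c}$ leaves a giant component of size $n^d-o(n^d)$) by counting connected subsets and applying isoperimetry; your route is shorter but black-boxes what the paper makes explicit.
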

\begin{enumerate}
\item This allows us to create graphs on $\sim n \log{\log{n}}$ edges that have one large connected component of size proportional to $\sim n$ with high likelihood. 
\item While the difference between $\log{n}$ and $\log{\log{n}}$ may seem negligible for any practical applications, there is a sizeable improvement in the
explicit constant that can have a big effect (we refer to \S \ref{sec:num} for numerical examples).
\item The result is sharp in the sense that the graph is not going to be connected with high probability (see \S 5). In practical applications the constants scale favorably and the graphs are connected (in practice, even large $n$ are too small for asymptotic effects). We furthermore believe that another randomized construction, discussed in Section \S 5,
has conceivably the potential of yielding connected graphs without using significantly more edges; we believe this to be an interesting problem.
\end{enumerate}

\begin{center}
\begin{figure}[h!]  \label{fig:point}
\includegraphics[width=\textwidth]{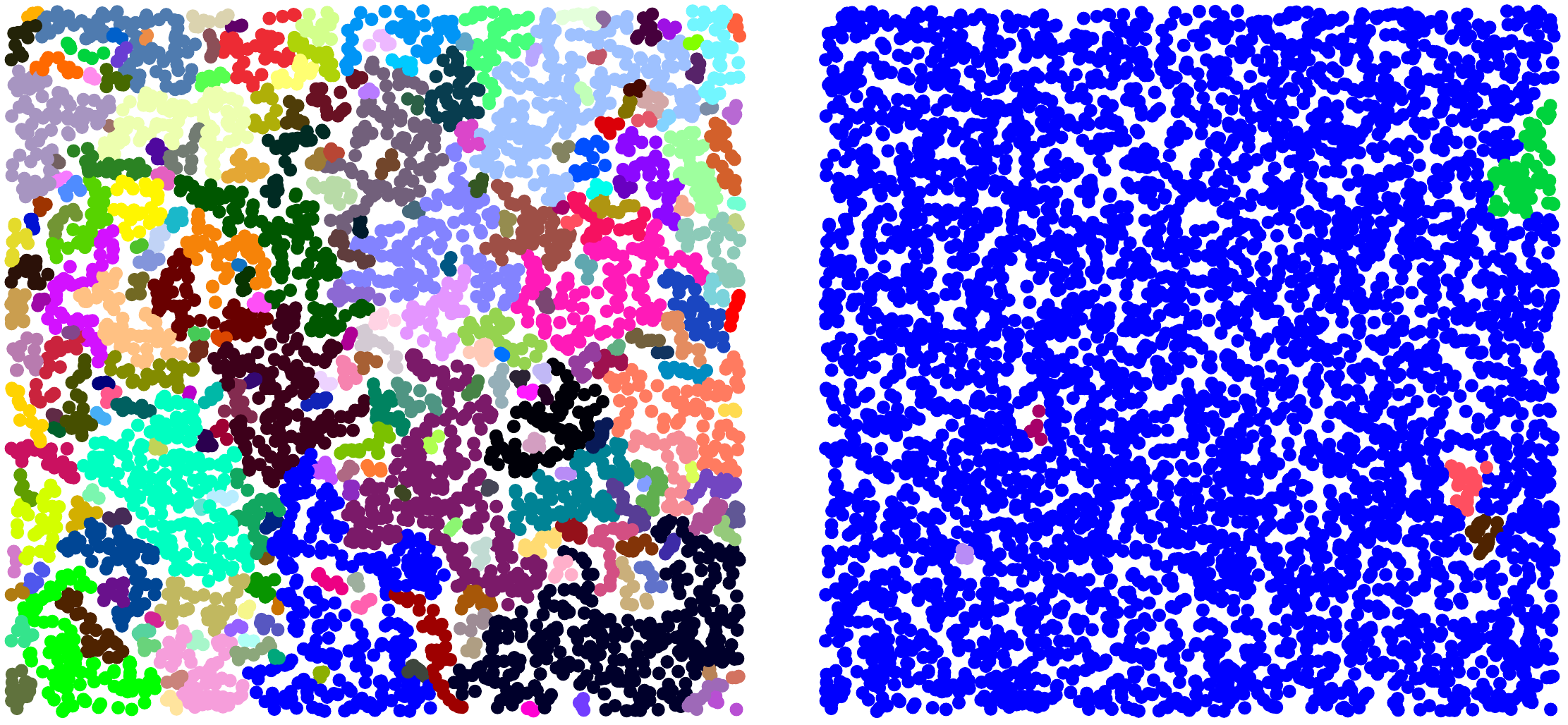}
\caption{Theorem 1 and Theorem 2 illustrated: 5000 uniformly distributed random points are connected by $2-$nearest neighbors (left) and 2 out of the $4-$nearest neighbors, randomly selected (right). Connected components are distinguished by color -- we observe a giant component on the right.}
\end{figure}
\end{center}

\subsection{The Big Picture.}
We believe this result to have many applications. Many approaches in data science require the construction of a local graph reflecting underlying structures and one often chooses
a $k-$nearest neighbor graph. If we consider the example of spectral clustering, then Maier, von Luxburg \& Hein \cite{maier} describe theoretical results regarding how this technique, applied to a $k-$nearest neighbor graph, approximates the underlying structure in the data set. Naturally, in light of the results cited above, the parameter $k$ has to grow at least like $k \gtrsim \log{n}$ for such results to become applicable. Our approach allows for a much sparser random graph that has comparable connectivity properties and several other useful properties.\\

\begin{center}
\begin{figure}[h!]  \label{fig:anomaly}
\begin{tikzpicture}[scale=4]
\draw [ultra thick] (0,0) -- (0,1) -- (1,1) -- (1,0) -- (0,0);
\filldraw (0.2, 0.2) circle (0.015cm);
\filldraw (0.2, 0.4) circle (0.015cm);
\filldraw (0.2, 0.6) circle (0.015cm);
\filldraw (0.2, 0.8) circle (0.015cm);
\filldraw (0.4, 0.2) circle (0.015cm);
\filldraw (0.4, 0.4) circle (0.015cm);
\filldraw (0.4, 0.6) circle (0.015cm);
\filldraw (0.4, 0.8) circle (0.015cm);
\filldraw (0.6, 0.2) circle (0.015cm);
\filldraw (0.6, 0.4) circle (0.015cm);
\filldraw (0.6, 0.6) circle (0.015cm);
\filldraw (0.6, 0.8) circle (0.015cm);
\filldraw (0.8, 0.2) circle (0.015cm);
\filldraw (0.8, 0.4) circle (0.015cm);
\filldraw (0.8, 0.6) circle (0.015cm);
\filldraw (0.8, 0.8) circle (0.015cm);
\draw [dashed] (0.2, 0.2) -- (0.4, 0.4);
\draw [dashed] (0.2, 0.2) -- (0.8, 0.2);
\draw [dashed] (0.2, 0.4) -- (0.8, 0.4);
\draw [dashed] (0.2, 0.6) -- (0.8, 0.6);
\draw [dashed] (0.2, 0.8) -- (0.8, 0.8);
\draw [dashed] (0.2, 0.2) -- (0.2, 0.8);
\draw [dashed] (0.4, 0.2) -- (0.4, 0.8);
\draw [dashed] (0.6, 0.2) -- (0.6, 0.8);
\draw [dashed] (0.8, 0.2) -- (0.8, 0.8);
\draw [dashed] (0.4, 0.2) -- (0.2, 0.4);
\draw [dashed] (0.6, 0.2) -- (0.4, 0.4);
\draw [dashed] (0.8, 0.2) -- (0.6, 0.4);
\draw [dashed] (0.8, 0.4) -- (0.6, 0.6);
\draw [dashed] (0.8, 0.6) -- (0.6, 0.8);
\draw [dashed] (0.8, 0.8) -- (0.6, 0.6);
\draw [dashed] (0.6, 0.8) -- (0.4, 0.6);
\draw [dashed] (0.2, 0.8) -- (0.4, 0.6);
\draw [dashed] (0.2, 0.6) -- (0.4, 0.4);
\draw [dashed] (0.4, 0.8) -- (0.6, 0.6);
\draw [dashed] (0.4, 0.4) -- (0.6, 0.6);

\draw [ultra thick] (2,0) -- (2,1) -- (3,1) -- (3,0) -- (2,0);
\filldraw (2.15, 0.15) circle (0.015cm);
\draw [dashed] (2.15, 0.15) -- (2.2, 0.35);
\draw [dashed] (2.15, 0.15) -- (2.2, 0.5);
\draw [dashed] (2.15, 0.15) -- (2.35, 0.35);
\draw [dashed] (2.15, 0.15) -- (2.3, 0.2);
\filldraw (2.2, 0.35) circle (0.015cm);
\draw [dashed] (2.2, 0.35) -- (2.3, 0.2);
\draw [dashed] (2.2, 0.35) -- (2.35, 0.35);
\draw [dashed] (2.2, 0.35) -- (2.2, 0.5);
\filldraw (2.2, 0.5) circle (0.015cm);
\draw [dashed] (2.2, 0.5) -- (2.35, 0.35);
\draw [dashed] (2.2, 0.5) -- (2.4, 0.6);
\filldraw (2.2, 0.8) circle (0.015cm);
\draw [dashed] (2.2, 0.8) -- (2.6, 0.8);
\draw [dashed] (2.2, 0.8) -- (2.4, 0.6);
\draw [dashed] (2.2, 0.8) -- (2.2, 0.5);
\filldraw (2.3, 0.2) circle (0.015cm);
\draw [dashed] (2.3, 0.2) -- (2.35, 0.35);
\draw [dashed] (2.3, 0.2) -- (2.2, 0.5);
\filldraw (2.35, 0.35) circle (0.015cm);
\draw [dashed] (2.35, 0.35) -- (2.4, 0.6);
\filldraw (2.4, 0.6) circle (0.015cm);
\draw [dashed] (2.4, 0.6) -- (2.4, 0.8);
\draw [dashed] (2.4, 0.6) -- (2.6, 0.6);
\filldraw (2.4, 0.8) circle (0.015cm);
\draw [dashed] (2.4, 0.8) -- (2.6, 0.6);
\filldraw (2.6, 0.2) circle (0.015cm);
\draw [dashed] (2.6, 0.2) -- (2.8, 0.2);
\draw [dashed] (2.6, 0.2) -- (2.6, 0.4);
\draw [dashed] (2.6, 0.2) -- (2.8, 0.4);
\draw [dashed] (2.6, 0.2) -- (2.35, 0.35);
\filldraw (2.6, 0.4) circle (0.015cm);
\draw [dashed] (2.6, 0.4) -- (2.6, 0.6);
\draw [dashed] (2.6, 0.4) -- (2.8, 0.4);
\draw [dashed] (2.6, 0.4) -- (2.8, 0.6);
\filldraw (2.6, 0.6) circle (0.015cm);
\draw [dashed] (2.6, 0.6) -- (2.6, 0.8);
\draw [dashed] (2.6, 0.6) -- (2.8, 0.6);
\filldraw (02.6, 0.8) circle (0.015cm);
\draw [dashed] (2.6, 0.8) -- (2.8, 0.8);
\draw [dashed] (2.6, 0.8) -- (2.8, 0.6);
\filldraw (02.8, 0.2) circle (0.015cm);
\draw [dashed] (2.8, 0.2) -- (2.8, 0.6);
\draw [dashed] (2.8, 0.2) -- (2.6, 0.4);
\filldraw (02.8, 0.4) circle (0.015cm);
\draw [dashed] (2.8, 0.4) -- (2.6, 0.6);
\filldraw (02.8, 0.6) circle (0.015cm);
\draw [dashed] (2.6, 0.6) -- (2.6, 0.8);
\filldraw (02.8, 0.8) circle (0.015cm);
\draw [dashed] (2.8, 0.8) -- (2.8, 0.6);
\end{tikzpicture}
\caption{Structured points, every point is connected to its $4-$nearest neighbors (some edges lie on top of each other); a slight perturbation of the points immediately creates a cluster in the $4-$nearest neighbor graph.}
\label{fig:structpoints}
\end{figure}
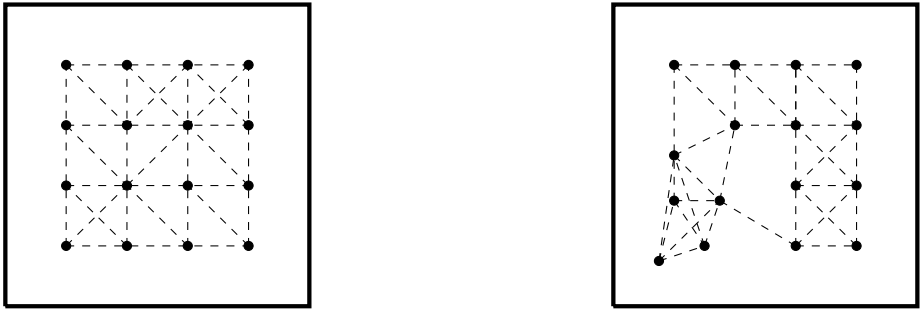
\end{center}
However, we believe that there is also a much larger secondary effect that should be extremely interesting to study: suppose we are given a set of points $\left\{x_1, \dots, x_n\right\} \subset [0,1]^2$. If
these points are well-separated then the $k-$nearest neighbor graph is an accurate representation of the underlying structure; however, even very slight inhomogeneities in the data, with some points 
being slightly closer than some other points, can have massive repercussions throughout the network (Figure \ref{fig:structpoints}). Even a slight degree of clustering will produce strongly localized
clusters in the $k-$nearest neighbor graph -- the required degree of clustering is so slight that even randomly chosen points will be subjected to it (and this is one possible interpretation of Theorem 1).\\

\textit{Smoothing at logarithmic scales.} It is easy to see that for points in $[0,1]^d$ coming from a Poisson process with intensity $n$, local clustering is happening at spatial scale $\sim (c \log{(n)}/n)^{1/d}$. The number of
points contained in a ball $B$ with volume $|B| \sim c \log{(n)}/n$ is given by a Poisson distribution with parameter $\lambda \sim c \log{n}$ and satisfies 
$$ \mathbb{P}\left(\mbox{$B$ contains less than}~\ell~\mbox{points}\right) \sim \frac{(c\log{n})^\ell}{\ell!}  \frac{1}{n^c} \lesssim \frac{1}{n^{c-\varepsilon}}.$$
This likelihood is actually quite small, which means that it is quite unlikely to find isolated clusters at that scale. In particular, an algorithm as proposed in Theorem 2 that picks random elements at that scale, will then destroy the nonlinear
concentration effect in the $k-$nearest neighbor construction induced by local irregularities of uniform distribution.
We believe this to be a general principle that should have many applications.
\begin{quote}
When dealing with inhomogeneous data, it can be useful to select $K \gg k$ nearest neighbors and then subsample $k$ random elements. Here, $K$ should be chosen at such a scale that
localized clustering effects disappear. 
\end{quote}

An important assumption here is that clusters at local scales are not intrinsic but induced by unavoidable sampling irregularities. We also emphasize that we believe the best way to implement
this principle in practice and in applications to be very interesting and far from resolved.

\section{Applications}

\subsection{Implications for Spectral Clustering.}
The first step in spectral clustering of points
$\{x_1,...,x_n\}\subset \mathbb{R}^d$ into $p$ clusters involves computation of a kernel
$w(x_i,x_j)$ for all pairs of points, resulting in an $n \times
n$ matrix $W$.  The kernel is a measure of affinity 
and is commonly chosen to be a Gaussian with bandwidth $\sigma$,
$$w(x_i,x_j) = \exp\left(-\|x_i-x_j\|^2/\sigma^2\right).$$ 
$W$ defines a graph with $n$ nodes where the weight of the edge between $x_i$
and $x_j$ is $w(x_i,x_j)$.  The Graph Laplacian $L$ is defined as:
$$L = D - W$$
where $D$ is a diagonal matrix with the sum of each row on the diagonal.  Following \cite{vonLuxburg}, the Graph Laplacian can be normalized symmetrically
$$L_{sym} = D^{-1/2}L D^{1/2} = I - D^{-1/2}WD^{-1/2},$$
giving a normalized Graph Laplacian. The eigenvectors $\{v_1,...,v_p \}$ corresponding to the $p$ smallest
eigenvalues of $L_{sym}$ are then calculated and concatenated into the $n
\times p$ matrix $V$. The rows of $V$ are normalized to have unit norm,
and its rows are then clustered using the $k-$means algorithm into $p$ clusters.
Crucially, the multiplicity of the eigenvalue of $0$ of $L_{sym}$ equals the
number of connected components, and the eigenvectors corresponding to the $0$th
eigenvalue are piecewise constant on each connected component. In the case of
$p$ well-separated clusters, each connected component corresponds to a cluster,
and the first $p$ eigenvectors contain the information necessary to separate
the clusters.  

\begin{center}
\begin{figure}[h!]
\includegraphics[width=0.8\textwidth]{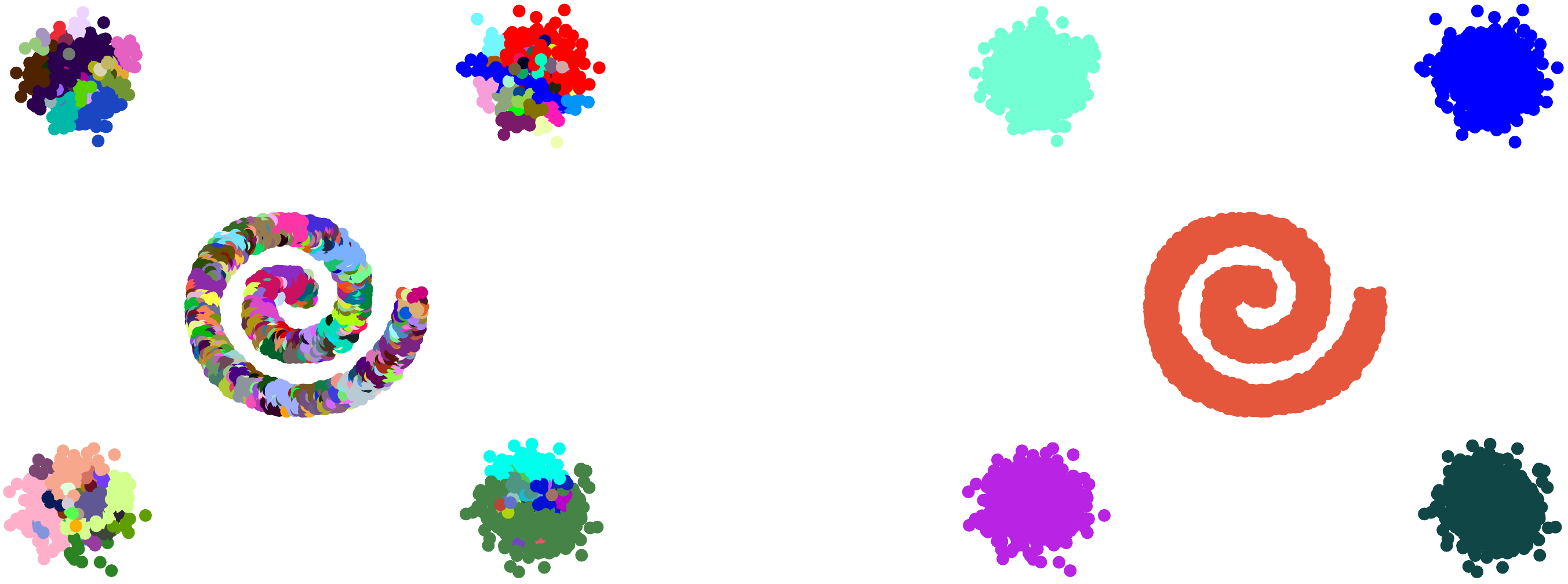}
\caption{16000 points arranged in 4 clusters and a spiral. We compare the effect of connecting every point to its $2-$nearest neighbors (left) and connecting every point to 2 randomly chosen out of
its $7-$nearest neighbors (right). Connected components are colored, the graph on the left has $\sim 700$ connected components; the graph on the right consists of the actual 5 clusters.}
\label{fig:spiral}
\end{figure}
\end{center}
However, the computational complexity and memory storage requirements of $W$
scale quadratically with $n$, typically making computation intractable when $n$
exceeds tens of thousands of points. Notably, as the distance between any two points $x$ and
$y$ increases, $w(x,y)$ decays exponentially. That is, for all $x,y$ that
are not close (relative to $\sigma$), $w(x,y)$ is negligibly small. Therefore, $W$ can be approximated by a sparse matrix $W'$, where $W_{ij}' =
w(x_i,x_j)$ if $x_j$ is among $x_i$'s $k-$nearest neighbors or $x_i$ is
among $x_j$'s nearest neighbors, and $0$ otherwise. Fast algorithms have been
developed to find approximate nearest neighbors (e.g.  \cite{jones}), allowing
for efficient computation of $W'$, and Lanczos methods can be used to
efficiently compute its eigenvectors.
When the number of neighbors, $k$, is chosen sufficiently large, $W'$ is a
sufficiently accurate approximation of $W$ for most applications.  However,
when $k$ cannot be chosen large enough (e.g.  due to memory limitations when
$n$ is on the order of millions of points), the connectivity of the $k-$nearest
neighbor graph represented by $W'$ can be highly sensitive to noise,
leading $W'$ to overfit the data and poorly model the overall structure.  In
the extreme case, $W'$ can lead to a large number of disconnected components
within each cluster, such that the smallest eigenvectors correspond to each of
these disconnected components and not the true structure of the data. On the
other hand, choosing a random $k-$sized subset of $K-$nearest neighbors, for
$K>k$, results in a graph with the same number of edges but which is much more
likely to be connected within each cluster, and hence, allow for spectral
clustering (Figure \ref{fig:spiral}).  The latter strategy is a more effective
``allocation'' of the $k$ edges, in the resource limited setting.

\subsection{Numerical Results.}   \label{sec:num}
We demonstrate the usefulness of this approach on the MNIST8M dataset generated
by InfiMNIST \cite{mnist8m}, which provides an unlimited supply of handwritten
digits derived from MNIST using random translations and permutations. For
simplicity of visualization, we chose digits $3,6,7$, resulting in a dataset of
$n=2,472,390$ in $d=784$ dimensional space.  We then computed the first ten
principal components (PCs) using randomized principal component analysis
\cite{li2017algorithm} and performed subsequent analysis on the PCs. Let
$L^{k}_{\text{sym}}$ denote the symmetrized Laplacian of the graph formed by
connecting each point to its $k-$nearest neighbors, with each edge weighted using the
Gaussian kernel from above with an adaptive bandwidth $\sigma_i$ equal to the point's 
distance to its $k$th neighbor. Similarly, let $L^{k,K}_{\text{sym}}$ refer to
the Laplacian of the graph formed by connecting each point to a $k-$sized
subset of its $K-$nearest neighbors, where each edge is weighted with a Gaussian of
bandwidth equal to the squared distance to its $K$th nearest neighbor. We then used the Lanczos iterations as implemented in MATLAB's `eigs' function to
compute the first three eigenvectors of $L^{30}_{\text{sym}}$,
$L^{50}_{\text{sym}}$, and $L^{2,100}_{\text{sym}}$ which we plot in Figure
\ref{fig:mnist}.

\begin{center}
\begin{figure}[h!]  
\begin{tikzpicture}
\node (0,0) {\includegraphics[width=\textwidth]{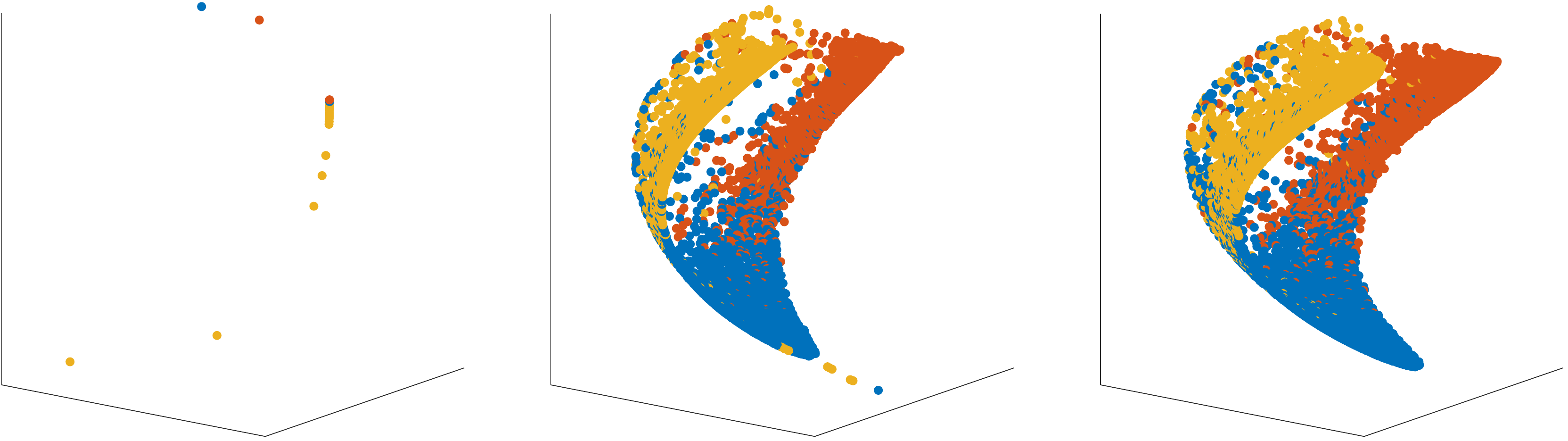}};
\node at (-5.3,-2.5) {$30-$nearest neighbors};
\node at (-5.3,-2.9) {48 minutes};
\node at (-0,-2.5) {$50-$nearest neighbors};
\node at (-0,-2.9) {16 minutes};
\node at (5.3,-2.5) {2 out of $100-$nearest neighbors};
\node at (5.3,-2.9) {4 minutes};
\end{tikzpicture}
\caption{Eigenvectors of sparse Graph Laplacian of three digits in the
Infinite-MNIST data set. Connecting to $k-$nearest neighbors when $k$ is too
small leads to catastrophic results (left). Connecting to 2 randomly chosen out
of the $100-$nearest neighbors points is comparable to connecting to the 50
nearest neighbors, but it requires fewer edges and computing the top
three eigenvectors is much faster (time under each plot).}
\label{fig:mnist}
\end{figure}
\end{center}

The first three eigenvectors of $L^{30}_{\text{sym}}$ do not separate the
digits, nor do they reveal the underlying manifold on which the digits lie.
Increasing the number of nearest neighbors in $L^{50}_{\text{sym}}$
provides a meaningful embedding. Remarkably, the same quality embedding can be
obtained with $L^{2,100}_{\text{sym}}$, despite it being a much sparser graph.
Furthermore, computing the first three eigenvectors of $L^{2,100}_{\text{sym}}$ took only 4
minutes, as compared to 48 and 16 minutes for $L^{30}_{\text{sym}}$ and 
$L^{50}_{\text{sym}}$.

\subsection{Potential Implementation}   \label{sec:implementation}
In order to apply this method to large datasets on resource-limited machines,
an efficient algorithm for finding a $k-$sized subset for the $K-$nearest
neighbors of each point is needed. For the above experiments we simply computed
all $K-$nearest neighbors and randomly subsampled, which is clearly suboptimal. Given a dataset so large that $K-$nearest neighbors cannot be
computed, how can we find $k-$sized random subsets of the $K-$nearest
neighbors for each point? Interestingly, this corresponds to an ``inaccurate'' nearest
neighbors algorithm, in that the ``near'' neighbors of each point are sought,
not actually the ``nearest.''  From this perspective, it appears an easier
problem than that of finding the nearest neighbors. We suggest a simple and fast implementation which we have found to
be empirically successful in Algorithm $\ref{simplealgo}$.

\begin{algorithm}[ht]
	\caption{Simple method for finding near neighbors}
	\label{simplealgo}
	\KwIn{Dataset $A=\{x_1,...,x_n\}\subset \mathbb{R}^d$, non-negative integers $k, K$ with $k<K$ }
	\KwOut{Matrix $M$ of size $n \times k$  where $M_{i1},...,M_{ik}$ are the indices of a random subset of $\sim K-$nearest neighbors of $x_i \in A$.  }

	Let $m = \left\lfloor\frac{kn}{K}\right\rfloor$

	Let $B \subseteq A$ be a set of $m$ points randomly selected from A.
	
	For each $x_i \in A$ find its $k$-nearest neighbors in $B$ and concatenate the indices of these points into the $i$-th row of $M$.

\end{algorithm}
On average $k$ points from $B$ will be among the $K-$nearest neighbors of any
point in $A$. As such, every point will connect to a $k-$sized subset of its $\sim K-$nearest neighbors.  Choosing a single subset of points, however,
dramatically reduces the randomness in the algorithm, and hence is not ideal.
We include it here for its simplicity and its success in our preliminary
experiments.

\subsection{Further outlook.} 
We demonstrate the application of our approach in the context of spectral
clustering but this is only one example. There are a great many other methods
of dimensionality reduction that start by constructing a graph that roughly approximates
the data, for example t-distributed Stochastic Neighborhood Embedding (t-SNE)
\cite{linderman2017clustering, maaten2008visualizing}, diffusion maps \cite{raphey} or Laplacian Eigenmaps
\cite{belkin}. Basically, this refinement could possibly be valuable for a very
wide range of algorithms that construct graph approximations out of underlying
point sets -- determining the precise conditions under which this method is
effective for which algorithm will strongly depend on the context, but needless
to say, we consider the experiments shown in this section to be extremely
encouraging.
We believe that this paper suggests many possible directions for future
research: are there other natural randomized near neighbor constructions
(we refer to \S 5 for an example)? Other questions include the behavior of
the spectrum and the induced random walk -- here we would like to briefly
point out that random graphs are natural expanders \cite{kol, mar, pin}.
This should imply several additional degrees of stability that the standard
$k-$nearest neighbor construction does not have.

\section{Proof of Theorem 1}

\subsection{A simple lower bound}
We start by showing that there exists a constant $\varepsilon_{d, k}$ such that
$$ \mathbb{E}  X_n \geq \varepsilon_{d, k} n.$$
This shows that the number of connected components grows at least linearly. 
\begin{proof} We assume that we are given a Poisson process with intensity $n$ in $[0,1]^d$.
\begin{figure}[h!]
\begin{tikzpicture}[scale=1]
\draw[thick] (0,0) circle (0.5cm);
\node [below] at (0,0) {$B$};
\draw[<->]
	(0,0) -- (0.5,0) node [above,midway]              {\small $r$};
\draw[thick] (0,0) circle (1.5cm);
\node [below] at (0,-0.7) {$A$};
\draw[<->]
	(0.5,0) -- (1.5,0) node [above,midway]              {\small $2r$};
\end{tikzpicture}
\caption{The likelihood of finding $\ell$ points in $B$ and no points in $A \setminus B$ for $|A| \sim n^{-1}$ and $|B| \ll |A|$ is uniformly bounded from below.}
\end{figure}
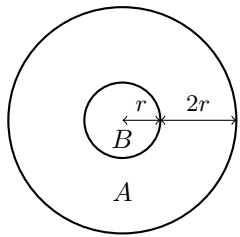

 It is possible
to place $\sim n$ balls of radius $r \sim n^{-1/d}$ in $[0,1]^d$ such that the balls with the same center and 
radius $3r$ do not intersect. (The implicit constant is related to the packing density of balls and decays rapidly in the dimension.)  The probability of finding $\ell$ points $\Omega \subset [0,1]^d$ is
$$ \mathbb{P}\left( \Omega~\mbox{contains}~\ell~\mbox{points}\right) = e^{-|\Omega|n} \frac{(|\Omega|n)^{\ell}}{\ell!}$$

This implies that the likelihood of finding $k$ points in the ball of radius $r$ and 0 points in the spherical shell obtained
from taking a ball with radius $3r$ and removing the ball of radius $r$ is given by a fixed constant independent of $n$
(because these sets all have measure $\sim n^{-1}$). This implies the result since the events in these disjoint balls are independent.
\end{proof}

\subsection{Bounding the degree of a node} \label{sec:pack}

Suppose we are given a set of points $\left\{x_1, \dots, x_n\right\} \subset \mathbb{R}^d$ and assume that every
vertex is connected to its $k-$nearest neighbors. 

\begin{quote}
\textbf{Packing Problem.} What is the maximal degree of a vertex in a $k-$nearest neighbor graph created by any set of points in $\mathbb{R}^d$ dimensions?
\end{quote}

 We will now prove the existence of a constant $c_d$, depending only on the dimension, such that the maximum
degree is bounded by $c_d k$. It is not difficult to see that this is the right order of growth in $k$: in $\mathbb{R}^d$, we can put a point in the origin and find a set of distinguished points at distance 1 from
the origin and distance 1.1 from each other. Placing $k$ points close to each of the distinguished points yields a construction of points where the degree of the point in the origin is $c_d^* k$, where $c_d^*$ 
is roughly the largest number of points one can place on the unit sphere so that each pair is at least $1-$separated.

\begin{figure}[h!]  \label{fig:three}
	\begin{tikzpicture}[scale=1]
\coordinate (X) at (0,0);
\coordinate (A) at ($(X)+(0:10mm)$  );
\coordinate (A2) at ($(X)+(10:10mm)$  );
\coordinate (B) at ($(X)+(73:10mm)$ );
\coordinate (B2) at ($(X)+(83:10mm)$ );
\coordinate (C) at ($(X)+(144:10mm)$);
\coordinate (C2) at ($(X)+(154:10mm)$);
\coordinate (D) at ($(X)+(216:10mm)$);
\coordinate (D2) at ($(X)+(226:10mm)$);
\coordinate (E) at ($(X)+(288:10mm)$);
\coordinate (E2) at ($(X)+(298:10mm)$);
\filldraw [black,fill=black](A) circle (0.05cm);
\filldraw [black,fill=black](A2) circle (0.05cm);
\draw [dashed] (A) -- (A2);
\draw [dashed] (A) -- (X);
\draw [dashed] (A2) -- (X);
\filldraw [black,fill=black](B) circle (0.05cm);
\filldraw [black,fill=black](B2) circle (0.05cm);
\draw [dashed] (B) -- (B2);
\draw [dashed] (B) -- (X);
\draw [dashed] (B2) -- (X);
\filldraw [black,fill=black](C) circle (0.05cm);
\filldraw [black,fill=black](C2) circle (0.05cm);
\draw [dashed] (C) -- (C2);
\draw [dashed] (C) -- (X);
\draw [dashed] (C2) -- (X);
\filldraw [black,fill=black](D) circle (0.05cm);
\filldraw [black,fill=black](D2) circle (0.05cm);
\draw [dashed] (D) -- (D2);
\draw [dashed] (D) -- (X);
\draw [dashed] (D2) -- (X);
\filldraw [black,fill=black](E) circle (0.05cm);
\filldraw [black,fill=black](E2) circle (0.05cm);
\draw [dashed] (E) -- (E2);
\draw [dashed] (E) -- (X);
\draw [dashed] (E2) -- (X);
\filldraw [black,fill=black](X) circle (0.05cm);
\end{tikzpicture}
\caption{A configuration where a point has $5k$ neighbors, with $k=2$.} 
\end{figure}
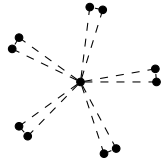

\begin{lemma} \label{lem:deg} The maximum degree of a vertex in a nearest-neighbor graph on points in $\mathbb{R}^d$ is bounded from above by $c_d k$. 
\end{lemma}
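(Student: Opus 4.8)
The plan is to bound the number of points $x_j$ that can have a given vertex $x_0$ among their $k$-nearest neighbors, since the degree of $x_0$ is at most $k$ (from outgoing edges) plus the number of such incoming edges. So I need to bound $N := \#\{j : x_0 \text{ is one of the } k \text{ nearest neighbors of } x_j\}$, and the claim reduces to showing $N \leq c_d k$ for some dimensional constant.

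First I would partition $\mathbb{R}^d$ into $O(c_d^{**})$ cones (infinite sectors) emanating from $x_0$, each of angular diameter small enough — say, such that any two unit vectors in the same cone have angle less than $\pi/6$ between them. The number of such cones needed is a dimensional constant (this is a standard covering-number-of-the-sphere estimate). It then suffices to show that each cone $C$ contains at most $k$ points $x_j$ (among those having $x_0$ as a $k$-nearest neighbor), because then $N \leq c_d^{**} k$. So fix a cone $C$ and suppose for contradiction it contains $k+1$ such points; order them by distance from $x_0$ as $y_1, \dots, y_{k+1}$ with $\|y_1 - x_0\| \leq \dots \leq \|y_{k+1} - x_0\|$.

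The key geometric fact is that within a narrow cone, the farther point is strictly closer to each nearer point than it is to $x_0$: if $y, y'$ lie in $C$ with $\|y' - x_0\| \geq \|y - x_0\|$, then the small angle $\angle y x_0 y' < \pi/6$ forces $\|y' - y\| < \|y' - x_0\|$ — this follows from the law of cosines, $\|y'-y\|^2 = \|y'-x_0\|^2 + \|y-x_0\|^2 - 2\|y'-x_0\|\|y-x_0\|\cos\theta$, and the fact that for $\theta$ small and $\|y-x_0\| \leq \|y'-x_0\|$ the right-hand side is less than $\|y'-x_0\|^2$. Applying this to $y_{k+1}$ and each of $y_1, \dots, y_k$: all $k$ of these points are strictly closer to $y_{k+1}$ than $x_0$ is, so $x_0$ cannot be among the $k$ nearest neighbors of $y_{k+1}$, contradicting our assumption. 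Hence each cone holds at most $k$ such points, and combining with the $k$ outgoing edges gives maximum degree $\leq (c_d^{**}+1)k =: c_d k$.

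The main obstacle — really the only delicate point — is pinning down the right angular threshold and verifying the law-of-cosines inequality holds uniformly: one needs the cone half-angle chosen so that $\cos\theta$ is bounded below by a constant strictly greater than $1/2$ (the value $1/2$ being exactly the borderline where $\|y'-y\| = \|y'-x_0\|$ when $\|y-x_0\| = \|y'-x_0\|$); taking the full angular diameter below $\pi/3$, e.g. $\pi/6$, gives strict inequality with room to spare, and the number of cones needed to cover $S^{d-1}$ at that resolution is finite and depends only on $d$. A minor bookkeeping point is handling ties in distances and the case where some $x_j = x_0$, but these are negligible perturbative issues. I would also remark that this argument makes the dimensional dependence of $c_d$ explicit (exponential in $d$, matching the sphere covering number), consistent with the lower-bound construction sketched before the lemma.
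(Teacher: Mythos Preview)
Your proposal is correct and follows essentially the same approach as the paper: cover $\mathbb{R}^d$ by finitely many cones with apex at the vertex, then use the law of cosines to show that within each cone at most $k$ points can have $x_0$ among their $k$-nearest neighbors. The only difference is cosmetic: the paper takes cones of opening angle $\pi/3$ (so $\cos\theta \geq 1/2$) and observes that the \emph{strict} inequality $\|a-x\|>\|b-x\|$ already forces $\|a-b\|<\|a-x\|$ at that threshold, whereas you take the more conservative angle $\pi/6$; this only affects the constant $c_d$, not the argument.
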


\begin{proof}
In a $k-$nearest neighbor graph any node $x$ has at least $k$ edges since it connects to its $k-$nearest neighbors. It therefore suffices to bound the number
of vertices that have $x$ among its $k-$nearest neighbors. Let now $C$ be a cone with apex in $x$ and opening angle $\alpha = \pi/3$.
\begin{figure}[h!]
\begin{tikzpicture}[scale=2]
	\foreach \angle [count=\xi, evaluate=\xi as \xx using int(\xi*12)] in {60,0,...,-270}{
		  \draw[line width=0.5pt] (\angle:0) -- (\angle:1);
	    }
\draw (0,0) circle (0.707cm);
\filldraw [black,fill=black](0,0) circle (0.03cm);
	\node [below] at (0,-0.1) {$x$};
\filldraw [black,fill=black](.4,.3) circle (0.02cm);
\filldraw [black,fill=black](.5,.2) circle (0.02cm);
\filldraw [black,fill=black](.3,.2) circle (0.02cm);
\filldraw [black,fill=black](.5,.5) circle (0.02cm);
\filldraw [black,fill=black](0.4,0.4) circle (0.02cm);
\pgfdeclarelayer{bg}    
\pgfsetlayers{bg,main} 
\begin{pgfonlayer}{bg}    
\coordinate (A) at (0,0);
\coordinate (X) at (1,0);
\coordinate (Y) at (0.5,0.86);
\path[clip] (A) -- (X) -- (Y);
\fill[white, draw=black] (A) circle (3mm);
\node at ($(A)+(30:2mm)$) {$\theta$};
\end{pgfonlayer}
\end{tikzpicture}
\caption{Six cones with angle $\theta = \pi/3$ covering $\mathbb{R}^2$. In any cone $C$, there are at most $k$ points for which
$x$ is among their $k-$nearest neighbors.} 
\end{figure}
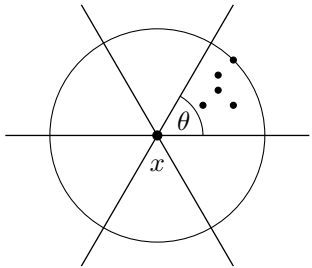
 Then, by definition,
for any two points $a,b \in C$, we have that
$$ \frac{\langle a-x,b-x \rangle}{\|a-x\| \|b-x\|} = \cos{\left(\angle(a-x, b-x)\right)} \geq \cos{\alpha} = \frac{1}{2}.$$
We will now argue that if $a$ has a bigger distance to $x$ than $b$, then $b$ is closer to $a$ than $x$. Formally,
we want to show that $\|a-x\| > \|b - x\|$ implies $\| a-b \| < \|a- x\|$. We expand the scalar product and use the inequality above to write
\begin{align*}
\| a- b\|^2 = \| (a-x) - (b-x) \|^2 &= \left\langle a-x, a-x \right\rangle - 2 \left\langle a-x, b-x \right\rangle +  \left\langle b-x, b-x \right\rangle \\
&\leq \|a - x\|^2 + \|b-x\|^2 - \|a-x\|\|b-x\| \\
&< \|a - x\|^2.
\end{align*}

Now we proceed as follows: we cover $\mathbb{R}^d$ with cones of opening angles $\alpha = \pi/3$ and apex in $x$. Then, clearly, the previous argument
implies that every such cone can contain at most $k$ vertices different from $x$ that have $x$ as one of their $k-$nearest neighbors. $c_d$ can thus be chosen
as one more than the smallest number of such cones needed to cover the space.
\end{proof}
	
\textbf{A useful Corollary.} We will use this statement in the following way: we let the random variable $X_n$ denote the number of clusters of $n$ randomly
chosen points w.r.t. some probability measure on $\mathbb{R}^n$ (we will apply it in the special case of uniform distribution on
$[0,1]^d$ but the statement itself is actually true at a greater level of generality). 

\begin{corollary} \label{cor1} The expected number of clusters cannot grow dramatically; we have
$$ \mathbb{E} X_{n+1} \leq \mathbb{E} X_n + c_d k.$$
\end{corollary}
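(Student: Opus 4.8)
The plan is to prove the inequality pointwise, i.e.\ for a fixed realization of the points, and then take expectations. Concretely, I would couple $X_n$ and $X_{n+1}$ by sampling $n+1$ i.i.d.\ points $x_1,\dots,x_{n+1}$ and letting $X_n$ count the components of the $k$-nearest neighbor graph $G_n$ on $\{x_1,\dots,x_n\}$ and $X_{n+1}$ count those of the graph $G_{n+1}$ on $\{x_1,\dots,x_{n+1}\}$; the first marginal is correct by exchangeability, so it suffices to show $X_{n+1}\le X_n + c_d k$ on every realization. Since the underlying measure is absolutely continuous, with probability one all pairwise distances are distinct and the $k$-nearest neighbor relation is unambiguous.

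First I would introduce the auxiliary graph $H := G_{n+1} - x_{n+1}$, obtained by deleting the new vertex together with its incident edges; note that $H$ retains \emph{every} edge of $G_{n+1}$ between points of $\{x_1,\dots,x_n\}$. Adding one vertex and some edges changes the component count by at most $+1$, so $X_{n+1}=c(G_{n+1})\le c(H)+1$; in fact, as soon as $n\ge k$ the vertex $x_{n+1}$ has at least $k\ge 1$ neighbors in $G_{n+1}$, hence joins an existing component, giving $c(G_{n+1})\le c(H)$. The finitely many small cases $n<k$ are harmless and can be absorbed into the constant. It then remains to compare $H$ with $G_n$, both of which live on the vertex set $\{x_1,\dots,x_n\}$.

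The key structural observation is that $H$ is a spanning subgraph of $G_n$ obtained by deleting at most $c_d k$ edges. On the one hand, if $x_j$ is among the $k$ nearest neighbors of $x_i$ inside $\{x_1,\dots,x_{n+1}\}$, then deleting the extra competitor $x_{n+1}$ can only improve $x_j$'s rank, so $x_j$ remains among the $k$ nearest neighbors of $x_i$ inside $\{x_1,\dots,x_n\}$; hence $E(H)\subseteq E(G_n)$. On the other hand, an edge $\{x_i,x_j\}$ of $G_n$ absent from $H$ must have disappeared because $x_{n+1}$ displaced $x_j$ from $x_i$'s neighbor list (or $x_i$ from $x_j$'s), which forces $x_{n+1}$ to lie among the $k$ nearest neighbors of $x_i$ (resp.\ $x_j$). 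Each such ``affected'' point loses exactly one old neighbor, because inserting $x_{n+1}$ simply swaps its former $k$-th nearest point for $x_{n+1}$; charging each lost edge to an affected endpoint that dropped the other gives an injection from lost edges into the set of affected points. Finally, every affected point is joined to $x_{n+1}$ in $G_{n+1}$, so the number of affected points is at most $\deg_{G_{n+1}}(x_{n+1})\le c_d k$ by Lemma~\ref{lem:deg}. Since deleting an edge increases the number of components by at most one, $c(H)\le c(G_n)+c_d k = X_n + c_d k$, and combining with the previous paragraph yields $X_{n+1}\le X_n + c_d k$; taking expectations completes the proof.

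I do not expect a serious obstacle. The only delicate point is the bookkeeping in the third paragraph: one must verify that an affected vertex drops precisely one neighbor (so that the charging map is injective) and — crucially — that the count of affected vertices is governed by the packing bound of Lemma~\ref{lem:deg}, which is a constant multiple of $k$, rather than by some quantity that could secretly depend on $n$.
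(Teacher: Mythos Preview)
Your proposal is correct and follows essentially the same route as the paper: both prove the deterministic pointwise inequality by observing that the new point $x_{n+1}$ is a $k$-nearest neighbor of at most $c_d k$ old points (Lemma~\ref{lem:deg}), each of which drops exactly one former neighbor, so at most $c_d k$ old edges are removed and each removal increases the component count by at most one. Your write-up is more careful about the intermediate graph $H$ and the charging argument, but the underlying idea is identical.
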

\begin{proof}
We prove a stronger statement: for any given given set of of points $\left\{x_1, \dots, x_n \right\} \in \mathbb{R}^d$ and any $x \in \mathbb{R}^d$,
we have that the number of clusters in $\left\{x_1, \dots, x_{n}, x\right\}$ is at most $c_d k$ larger than the number of clusters in  $\left\{x_1, \dots, x_{n}\right\}$.
Adding $x$ is going to induce a localized effect in the graph: the only new edges that are being created are the $k-$nearest neighbors of $x$ that are being added
as well as changes coming from the fact that some of the points $x_1, \dots, x_n$ will now have $x$ as one of their $k-$nearest neighbors. We have already
seen in the argument above that this number is bounded by $c_d k$. This means that at most $c_d k$ of the existing edges are being removed.
Removing an edge can increase the number of clusters by at most 1 and this then implies the result.
\end{proof}

\subsection{The diameter of connected components}
The fact that most connected components are contained in a rather small region of space follows relatively quickly from Theorem 1 and the following consequence of the degree bound.

\begin{lemma} \label{lem:unif}
	Let $\left\{x_1, \dots, x_n \right\} \subset [0,1]^d$. Summing the distances over all pairs where one is a $k-$nearest neighbor of the other is bounded by
$$ \sum_{x_i, x_j ~{\tiny \mbox{knn}}}{ \|x_i - x_j\|} \lesssim_{k,d}  n^{\frac{d-1}{d}}.$$
\end{lemma}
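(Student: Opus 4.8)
\emph{Strategy: dyadic decomposition by edge length.} The plan is to split the knn-edges according to their length scale. For $j \in \mathbb{Z}$ let $E_j$ be the set of unordered pairs $\{x_i,x_j\}$ for which one point is a $k$-nearest neighbor of the other and $\|x_i-x_j\| \in (2^{-j-1},2^{-j}]$. Since every edge of the knn graph has length at most $\mathrm{diam}([0,1]^d)=\sqrt{d}$, only scales with $2^{-j-1}<\sqrt{d}$ contribute, and $E_j=\emptyset$ otherwise. Bounding each term by its largest length gives
$$ \sum_{x_i,x_j\ \mathrm{knn}} \|x_i-x_j\| \;\le\; \sum_{j} 2^{-j}\,|E_j|, $$
so everything reduces to a good upper bound on $|E_j|$.

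\emph{The key estimate $|E_j|\lesssim_{k,d} 2^{jd}$.} Call a point $x_i$ \emph{$j$-light} if $B(x_i,2^{-j-1})$ contains at most $k$ of the points $x_1,\dots,x_n$. If $\{x_i,x_j\}\in E_j$ and $x_j$ is among the $k$ nearest neighbors of $x_i$, then $B(x_i,2^{-j-1})$ contains only points strictly closer to $x_i$ than $x_j$, of which there are at most $k-1$; hence $x_i$ is $j$-light. Orient each edge of $E_j$ towards such an endpoint (at least one direction always works by definition of a knn-edge, pick one if both do); then $|E_j|$ equals the number of oriented edges, and a $j$-light point is the tail of at most $k$ of them (it has only $k$ nearest neighbors), so $|E_j| \le k\cdot\#\{j\text{-light points}\}$. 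The number of $j$-light points is bounded by a packing argument: take a maximal subset $Q$ of the $j$-light points whose pairwise distances are at least $2^{-j-1}$. By maximality every $j$-light point lies in some $B(q,2^{-j-1})$ with $q\in Q$, and each such ball contains at most $k$ points because $q$ is $j$-light, so $\#\{j\text{-light points}\}\le k|Q|$. Meanwhile the balls $B(q,2^{-j-2})$, $q\in Q$, are pairwise disjoint and lie in a fixed $d$-dependent neighborhood of the cube (we are in the regime $2^{-j-1}<\sqrt d$), so a volume comparison gives $|Q|\lesssim_d 2^{jd}$. Combining, $|E_j|\lesssim_{k,d} 2^{jd}$; trivially we also have $|E_j| \le \#\{\text{edges}\} \le k n$.

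\emph{Summing the two bounds.} With $|E_j|\lesssim_{k,d}\min(2^{jd},kn)$, split the geometric sum at the crossover $j^\star := \lceil \tfrac1d\log_2(kn)\rceil$. For $j\le j^\star$ the summands $2^{-j}2^{jd}=2^{j(d-1)}$ form an increasing geometric series (here we use $d\ge 2$) summing to $\lesssim 2^{j^\star(d-1)}\lesssim_{k,d} n^{(d-1)/d}$; for $j>j^\star$ the summands $kn\,2^{-j}$ form a decreasing geometric series summing to $\lesssim kn\,2^{-j^\star}\lesssim_{k,d} n^{(d-1)/d}$. Adding the two pieces gives the claim. (In the excluded case $d=1$ the first sum picks up a spurious $\log n$; there one argues directly that the sum of all knn-distances in $[0,1]$ is $\lesssim_k 1$ by writing each knn-distance as a sum of consecutive gaps of the sorted points.)

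\emph{Main obstacle.} The substance is the scalewise bound $|E_j|\lesssim_{k,d} 2^{jd}$ — concretely the packing estimate on the number of $j$-light points, together with a clean treatment of the degenerate scales (where $2^{-j-1}$ exceeds the diameter of the cube, or where $n\le k$). Everything else is a bookkeeping geometric series. We remark that Lemma \ref{lem:deg} gives an alternative, marginally shorter route to $|E_j|\le c_d k\cdot\#\{j\text{-light points}\}$ (each $j$-light point has total degree $\le c_d k$), but the orientation argument above is self-contained.
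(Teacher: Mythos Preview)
Your argument is correct and takes a genuinely different route from the paper. The paper's proof is a short application of H\"older's inequality: with at most $kn$ terms in the sum, H\"older with exponents $d$ and $d/(d-1)$ gives
\[
\sum_{\mathrm{knn}}\|x_i-x_j\|\;\le\;(kn)^{(d-1)/d}\Bigl(\sum_{\mathrm{knn}}\|x_i-x_j\|^d\Bigr)^{1/d},
\]
and the $d$-th powers are, up to the constant $\omega_d$, the volumes of the balls $B(x_i,\|x_i-x_j\|)$. The degree bound (Lemma~\ref{lem:deg}) then shows that any point of $[0,1]^d$ lies in only $\lesssim_{k,d}1$ of these balls, so their total volume is $\lesssim_{k,d}1$ and the result follows. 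Your dyadic decomposition replaces this interpolation step by an explicit count of edges at each length scale: the packing estimate $|E_j|\lesssim_{k,d}2^{jd}$ plays the role of the bounded-overlap bound on $\sum|B|$, and the geometric-series balancing at $j^\star\sim\frac{1}{d}\log_2 n$ plays the role of H\"older. The paper's route is shorter; yours is more elementary (no H\"older, and the appeal to Lemma~\ref{lem:deg} is optional, as you note) and makes the length--multiplicity tradeoff visible scale by scale. Both ultimately rest on the same geometric fact that a ball of radius $r$ around a point whose $k$-th neighbor lies at distance $\ge r$ contains at most $k$ sample points.

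One minor wording slip: you write ``orient each edge of $E_j$ towards such an endpoint'' and then bound the number of edges with a given $j$-light point as \emph{tail}. These directions disagree; you mean to orient each edge \emph{from} the endpoint that has the other among its $k$ nearest neighbors, so that this endpoint is the tail and the justification ``it has only $k$ nearest neighbors'' applies. The mathematics is unaffected.
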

\begin{proof} Whenever $x_j$ is a $k-$nearest neighbor, we put a ball $B(x_i, \|x_j - x_i\|)$ of radius $\|x_j - x_i\|$ around $x_i$. A simple
application of H\"older's inequality shows that
$$ \sum_{x_i, x_j ~{\tiny \mbox{knn}}}{ \|x_i - x_j\|}  \leq (kn)^{\frac{d-1}{d}} \left(  \sum_{x_i, x_j ~{\tiny \mbox{knn}}}{ \|x_i - x_j\|^d} \right)^{\frac{1}{d}}  \lesssim_{k,d} n^{\frac{d-1}{d}}  \sum_{ x_i, x_j ~{\tiny \mbox{knn}}}{\left|B(x_i, \|x_j - x_i\|) \right|}.$$ 
Lemma \ref{lem:deg} shows that each point in $[0,1]^d$ can be contained in at most $c_d k$ balls (otherwise adding a point would create a vertex with too large a degree). This implies
$$ \sum_{ x_i, x_j ~{\tiny \mbox{knn}}}{\left|B(x_i, \|x_j - x_i\|) \right|} \leq c_k 5^d \lesssim_{k,d} 1$$
and we have the desired result.
\end{proof}
We note that this result has an obvious similarity to classical deterministic upper bounds on the length of a traveling salesman path, we refer to \cite{few, steel, steint} for examples. Nonetheless,
while the statements are similar, the proof of this simple result here is quite different in style. It could be of interest to obtain some good upper bounds for this problem.

\begin{corollary}
The diameter of a typical connected component is $\lesssim_{d,k} n^{-\frac{1}{d}}.$
\end{corollary}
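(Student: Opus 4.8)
The plan is to combine the deterministic total-edge-length bound of Lemma~\ref{lem:unif} with a high-probability lower bound on the number of connected components. First I would record an elementary geometric fact: in a graph whose vertices are points of $\mathbb{R}^d$ and whose edges carry their Euclidean lengths $\ell(e)$, the Euclidean diameter of a connected component $C$ is at most the sum of the lengths of the edges lying in $C$. Indeed, any two vertices $x,y$ of $C$ are joined by a simple path, and by the triangle inequality $\|x-y\|$ is at most the sum of the edge lengths along that path, which is at most $\sum_{e\in C}\ell(e)$. Summing over all connected components $C_1,\dots,C_{X_n}$, and using that every edge of the $k$-nearest neighbor graph lies in a single component and arises from some knn-relation, I obtain
$$ \sum_{i=1}^{X_n} \operatorname{diam}(C_i) \;\le\; \sum_{e}\ell(e) \;\le\; \sum_{x_i,x_j\ \mathrm{knn}} \|x_i-x_j\| \;\lesssim_{k,d}\; m^{\frac{d-1}{d}}, $$
where $m$ is the (random) number of points produced by the Poisson process; the last inequality is Lemma~\ref{lem:unif} applied to the realized point configuration.

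It remains to divide by $X_n$ and take expectations, and for that I need $m$ and $X_n$ to be of the right order with high probability. Since $m \sim \mathrm{Poisson}(n)$, we have $m \le 2n$ with probability $1-e^{-c_1 n}$. For $X_n$, I would reuse the construction from the simple lower bound $\mathbb{E}X_n \gtrsim n$ proved above: it produces $\Theta(n)$ pairwise disjoint regions, each of which is an isolated cluster with a fixed positive probability, and these events are independent; a Chernoff bound then gives $X_n \ge c_2 n$ with probability $1-e^{-c_3 n}$, with $c_2,c_3>0$ depending only on $d$ and $k$. On the intersection $G$ of these two events,
$$ \frac{1}{X_n}\sum_{i=1}^{X_n}\operatorname{diam}(C_i) \;\lesssim_{k,d}\; \frac{(2n)^{\frac{d-1}{d}}}{c_2 n} \;\lesssim_{d,k}\; n^{-\frac1d}, $$
while on the complement (probability at most $e^{-c_1 n}+e^{-c_3 n}$) I use only the trivial bound $\operatorname{diam}(C_i)\le\sqrt d$, so the average component diameter never exceeds $\sqrt d$. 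Taking expectations over a uniformly chosen component,
$$ \mathbb{E}\!\left[\frac{1}{X_n}\sum_{i=1}^{X_n}\operatorname{diam}(C_i)\right] \;\le\; C\, n^{-\frac1d} + \sqrt d\,\bigl(e^{-c_1 n}+e^{-c_3 n}\bigr) \;\lesssim_{d,k}\; n^{-\frac1d}, $$
which is the asserted bound on the diameter of a typical connected component.

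The geometric inequality and the final estimate are routine; the step that genuinely requires care is the high-probability lower bound $X_n \gtrsim n$. Knowing only $\mathbb{E}X_n \gtrsim n$ (from Theorem~1) is not by itself enough, because $X_n$ sits in the denominator of the quantity we are estimating, so one needs a bound that holds with overwhelming probability. What rescues the argument is that the isolated clusters built in the lower-bound proof occupy disjoint regions of $[0,1]^d$ and are therefore genuinely independent, so concentration upgrades the linear-in-expectation statement to a linear-with-high-probability statement; this is the only place where probability, rather than deterministic geometry, enters the proof.
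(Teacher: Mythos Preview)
Your argument is correct and follows the same route as the paper: bound the sum of the diameters of the connected components by the total $k$-nn edge length from Lemma~\ref{lem:unif}, then divide by the (linear in $n$) number of components supplied by Theorem~1. The paper dispatches this in a single sentence, whereas you take extra care with the fact that $X_n$ sits in the denominator by upgrading the lower bound $\mathbb{E}X_n\gtrsim n$ to a high-probability statement via the independence of the disjoint-ball events; this is a legitimate refinement of a step the paper leaves implicit, not a different approach.
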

This follows easily from the fact that we can bound the sum of the diameters of all connected component by 
the sum over all distances of $k-$nearest neighbors. Put differently, the typical cluster is actually contained 
in a rather small region of space; however, we do emphasize that the implicit constants (especially in the estimate on the number of clusters)
are rather small and thus the implicit constant in this final diameter estimate is bound to be extremely large. This means that this phenomenon
is also not usually encountered in practice even for a moderately large number of points.
As for Lemma 2 itself, we can get much more precise results if we assume that
the points stem from a Poisson process with intensity $n$ on $[0,1]^d$.
We prove a Lemma that is fairly standard; the special cases $k=1,2$ are easy to find (see e.g. \cite{stein} and references therein); we provide 
the general case for the convenience of the reader.
\begin{lemma}
 The probability distribution function of the distance $r$ of a fixed point to its $k$th nearest neighbor in a Poisson process with intensity $n$ is
	$$f_{k,d}(r) = \frac{d n^k \omega_d^k r^{kd-1}}{(k-1)!} e^{-n \omega_d r^d},~\mbox{where} \qquad \omega_d = \frac{\pi^{\frac{d}{2}}}{\Gamma(\frac{d}{2} +1 )}.$$
\end{lemma}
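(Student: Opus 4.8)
The plan is to compute the cumulative distribution function first and then differentiate. Fix the point $x_0$; the computation is purely local, so we may think of the process as living on all of $\mathbb{R}^d$ and the boundary of $[0,1]^d$ plays no role for the density. The one structural fact we use is that a Poisson process of intensity $n$, restricted to the ball $B(x_0,r)$, has a number of points that is Poisson distributed with mean equal to $n$ times the volume of the ball, namely $\lambda(r) = n\,\omega_d\, r^d$, where $\omega_d r^d = |B(x_0,r)|$; independence over disjoint regions is exactly the defining property of the process.

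Let $R_k$ denote the distance from $x_0$ to its $k$th nearest neighbor. The elementary observation is that $R_k > r$ if and only if the ball $B(x_0,r)$ contains at most $k-1$ of the Poisson points, so that
$$ F_{k,d}(r) := \mathbb{P}\left(R_k \le r\right) = 1 - \sum_{j=0}^{k-1} e^{-\lambda(r)}\frac{\lambda(r)^j}{j!}. $$
Differentiating in $r$, writing $\lambda = \lambda(r)$ and using $\lambda'(r) = d\,n\,\omega_d\, r^{d-1}$, the product rule gives
$$ f_{k,d}(r) = F_{k,d}'(r) = \lambda' e^{-\lambda}\sum_{j=0}^{k-1}\frac{\lambda^j}{j!} - \lambda' e^{-\lambda}\sum_{j=1}^{k-1}\frac{\lambda^{j-1}}{(j-1)!}. $$
The second sum equals $\sum_{i=0}^{k-2}\lambda^i/i!$, so the two sums telescope and only the $j=k-1$ term survives, leaving $f_{k,d}(r) = \lambda' e^{-\lambda}\lambda^{k-1}/(k-1)!$. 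Substituting $\lambda = n\omega_d r^d$ and $\lambda' = d\,n\,\omega_d\, r^{d-1}$ and collecting the powers of $r$ yields exactly $\dfrac{d\,n^k\,\omega_d^k\, r^{kd-1}}{(k-1)!}\, e^{-n\omega_d r^d}$.

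There is essentially no obstacle here: every step is routine. The only point deserving a word of care is the meaning of ``a fixed point'': if one instead wanted the law of the $k$th-nearest-neighbor distance of a \emph{typical point of the process itself}, one would first invoke the Slivnyak--Mecke theorem to reduce to the present situation (conditioning on a point of the process at $x_0$ leaves the law of the remaining points unchanged), after which the computation above applies verbatim. As a sanity check one may also verify $\int_0^\infty f_{k,d}(r)\,dr = 1$ via the substitution $u = n\omega_d r^d$, which turns the integral into $\int_0^\infty \frac{u^{k-1}}{(k-1)!}e^{-u}\,du = 1$.
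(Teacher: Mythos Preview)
Your proof is correct and follows essentially the same route as the paper: write the CDF via the Poisson probabilities $\mathbb{P}(|B_r|=\ell)$, differentiate, and observe that the resulting sums telescope to leave only the $j=k-1$ term. The paper phrases this as ``induction'' and ``summing a telescoping sum'' but the computation is the same; your version is simply more explicit about the telescoping step.
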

\begin{proof} The proof proceeds by induction, the base case is elementary.
	We derive the cumulative distributive function and then differentiate
	it. First, recall that for Borel measurable region $B \subset
	\mathbb{R}^d$, the probability of finding $\ell$ points in $B$ is
	$$\mathbb{P}\left( B~\mbox{contains}~\ell~\mbox{points}\right) = e^{-n |B| } \frac{(n |B|)^{\ell}}{\ell !} $$
 Let $F_{k,d}(r)$ denote the probability that the $k-$nearest neighbor is at least at a distance $r$ and let, as usual, $B_r \subset \mathbb{R}^d$ denote a ball of radius $r$.
	\begin{align*}
	F_{k,d}(r) = 1 - \sum\limits_{\ell=0}^{k-1}  \mathbb{P} \left( B_r~\mbox{contains}~\ell~\mbox{points} \right)  = 1 - \left(e^{-n \omega_d r^{d}} + \sum\limits_{\ell=1}^{k-1} \frac{n^{\ell} \omega_d^{\ell} r^{\ell d}}{\ell!} e^{-n \omega_d r^{d}} \right)
	\end{align*}
	Differentiating in $r$ and summing a telescoping sum  yields
  $$f_{k,d}(r)  =  \frac{d n^{k} \omega_d^{k} r^{kd-1}}{(k-1)!}e^{-n\omega_d r^d}.$$
\end{proof}
The distance $r$ to the $k$th neighbor therefore has expectation  
  $$\int_0^\infty r f_{k,d}(r) dr =  \int_0^\infty \frac{d n^{k} \omega_d^{k} r^{kd}}{(k-1)!}e^{-n\omega_d r^d} dr= \frac{\Gamma \left(k+\frac{1}{d}\right) }{\omega_d^{1/d}(k-1)!} \frac{1}{n^{1/d}}$$
For example, in two dimensions, the expected distance to first five nearest neighbors is  
$$\frac{1}{2 \sqrt{n}},\frac{3}{4 \sqrt{n}},\frac{15}{16 \sqrt{n}},\frac{35}{32 \sqrt{n}},\frac{315}{256 \sqrt{n}}, \dots$$
respectively. We note, but do not prove or further pursue, that the sequence has some amusing properties and seems to be given (up to a factor of 2 in the denominator) by the series expansion 
$$ (1-x)^{\frac{3}{2}} = 1 + \frac{3}{2} x + \frac{15}{8}x^2 + \frac{35}{16} x^3 + \frac{315}{128} x^4 + \frac{693}{256} x^5 + \frac{3003}{1024} x^6 + \dots$$

\subsection{A Separation Lemma.} The proof of Theorem 1 shares a certain similarity with arguments that one usually encounters in subadditive Euclidean functional theory (we refer to the seminal work of Steele \cite{steel1, steel2}). The major difference is that our functional, the number of connected components, is scaling invariant, and slightly more troublesome, not monotone: adding a point can decrease the number of connected components. 
Suppose now that we are dealing with $n$ points and try to group them into $n/m$ sets of $m \ll n$ points each. Here, one should think of $m$ as a very large constant and $n \rightarrow \infty$. Ideally, we would like to argue
that the number of connected components among the $n$ is smaller than the sum of the connected components of each of the $n/m$ sets of $m$ points. This, however, need not be generally true (see Fig. \ref{fig:cut}).

\begin{center}
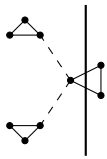
\begin{figure}[h!] 
\begin{tikzpicture}[scale=4]
\draw [thick] (-0.05,0.5) -- (-0.05,1);
\filldraw (-0.2, 0.9) circle (0.01cm);
\filldraw (-0.3, 0.9) circle (0.01cm);
\filldraw (-0.25, 0.95) circle (0.01cm);
\draw  (-0.2, 0.9) -- (-0.3, 0.9);
\draw  (-0.25, 0.95) -- (-0.3, 0.9);
\draw  (-0.25, 0.95) -- (-0.2, 0.9);

\filldraw (-0.2, 0.6) circle (0.01cm);
\filldraw (-0.3, 0.6) circle (0.01cm);
\filldraw (-0.25, 0.55) circle (0.01cm);
\draw  (-0.2, 0.6) -- (-0.3, 0.6);
\draw  (-0.25, 0.55) -- (-0.3, 0.6);
\draw  (-0.25, 0.55) -- (-0.2, 0.6);

\filldraw (-0.1, 0.75) circle (0.01cm);
\filldraw (0, 0.80) circle (0.01cm);
\filldraw (0, 0.7) circle (0.01cm);
\draw (-0.1, 0.75) -- (0,0.8);
\draw (-0.1, 0.75) -- (0,0.7);
\draw (0, 0.7) -- (0,0.8);
\draw [dashed] (-0.1, 0.75) -- (-0.2, 0.6);
\draw [dashed] (-0.1, 0.75) -- (-0.2, 0.9);
\end{tikzpicture}
\caption{Cutting this set of points in the middle leads to one point seeking two new nearest neighbors and decreases the total number of clusters.}
\label{fig:cut}
\end{figure}
\end{center}

\begin{lemma}[Separation Lemma.] \label{lem:separ} For every $\varepsilon > 0$, $k \in \mathbb{N}$ fixed, there exists $m \in \mathbb{N}$ such that, for all $N \in \mathbb{N}$ sufficiently large, we can subdivide $[0,1]^d$ into
$N/m$ sets of the same volume whose combined volume is at least $1 - \varepsilon$ such that the expected number of connected components of points following a Poisson distribution
of intensity $N$ in $[0,1]^d$ (each connecting to its $k-$nearest neighbors) is the sum of connected components of each piece with error $1 + \mathcal{O}(\varepsilon)$.
\end{lemma}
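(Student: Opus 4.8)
Tile $[0,1]^d$ by $M$ congruent axis-parallel subcubes $Q_1,\dots,Q_M$ of side $s=M^{-1/d}$, where $M=\lfloor (N/m)^{1/d}\rfloor^d$ so that $M\sim N/m$ (this roundoff is lower order and harmless). Shrink each $Q_i$ to a concentric subcube $Q_i'$ of side $s-2w$, the width $w>0$ to be chosen below; the pieces will be the $Q_i'$, and the discarded frames $L_i=Q_i\setminus Q_i'$ form a buffer zone of total volume $1-(1-2w/s)^d$. Write $X_N$ for the number of connected components of the $k$-nearest neighbour graph $G$ on the whole Poisson process, and $Y_i$ for the number of connected components of the $k$-nearest neighbour graph built using \emph{only} the points that fall in $Q_i'$. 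Because the $Q_i'$ are congruent and the Poisson process is translation invariant and independent on disjoint sets, the $Y_i$ are i.i.d.\ with $\mathbb{E}Y_i=g(N|Q_i'|)$, where $g(\lambda)$ is the expected component count of a Poisson process of total mass $\lambda$ on the unit cube; using the linear lower bound $\mathbb{E}X_n\gtrsim_{d,k}n$ established above, $\sum_i\mathbb{E}Y_i=\Theta_{d,k}(N)$. It therefore suffices to prove $\mathbb{E}\,|X_N-\sum_iY_i|\le C_{d,k}\,\varepsilon N$, and the multiplicative error $1+\mathcal{O}(\varepsilon)$ follows.

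The width $w$ has to meet two competing constraints. To keep the buffer thin, equivalently to have the combined volume $\sum_i|Q_i'|=(1-2w/s)^d$ of the pieces at least $1-\varepsilon$, it suffices to take $w=\tfrac{\varepsilon}{2d}\,s$. With this choice $N\omega_dw^d=\omega_d(\varepsilon/2d)^d\,(N/M)\sim\omega_d(\varepsilon/2d)^d\,m$, which tends to $\infty$ as $m\to\infty$ with $\varepsilon$ and $d$ fixed — this is exactly why the Lemma allows $m$ to depend on $\varepsilon$. Since, by the Poisson probabilities underlying the law of the $k$-th nearest-neighbour distance, $\mathbb{P}(\text{$k$-th NN distance}>w)=\sum_{\ell=0}^{k-1}e^{-N\omega_dw^d}(N\omega_dw^d)^\ell/\ell!$, we now fix $m$ so large that this probability is at most $\varepsilon$ (uniformly in large $N$, since $N\omega_dw^d$ is, asymptotically, $N$-independent).

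We compare $X_N$ with $\sum_iY_i$ through the intermediate graph $G''=G-S$, where $S=\bigcup_iL_i$ is the set of buffer points. First, $\mathbb{E}|S|\le\varepsilon N$; since every vertex of $G$ has degree at most $c_dk$ by Lemma~\ref{lem:deg}, passing from $G$ to $G''$ deletes at most $c_dk|S|$ edges and then $|S|$ isolated vertices, so $|\#\mathrm{comp}(G)-\#\mathrm{comp}(G'')|\le c_dk|S|$ pointwise, hence $\le c_dk\,\varepsilon N$ in expectation. Second, $G''$ and $G'=\bigsqcup_i(\text{$k$-NN graph on the points in }Q_i')$ have the same vertex set (all points lying in $\bigcup_iQ_i'$), and an edge belongs to $E(G'')\triangle E(G')$ only if it is incident to a \emph{bad} point $x\in Q_i'$ — one whose $k$-th nearest neighbour in the whole process lies outside $Q_i'$, i.e.\ whose $k$-th nearest-neighbour distance exceeds $\mathrm{dist}(x,\partial Q_i')$. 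By the first-moment (Mecke) formula for Poisson processes, the expected number of bad points equals $N\int_{\bigcup_iQ_i'}\mathbb{P}(\text{$k$-th NN distance at }y\ge\mathrm{dist}(y,\partial Q_{i(y)}'))\,dy$; splitting this integral into the region $\{\mathrm{dist}(y,\partial Q_i')<w\}$ (volume at most $\varepsilon$, integrand bounded by $1$) and its complement (integrand bounded by its value at $w$, which is $\le\varepsilon$) shows it is at most $2\varepsilon N$. As bad points have degree at most $c_dk$ in both $G''$ and $G'$ (Lemma~\ref{lem:deg} once more) and each edge flip changes the component count by at most $1$, we get $\mathbb{E}|\#\mathrm{comp}(G'')-\#\mathrm{comp}(G')|\le 4c_dk\,\varepsilon N$. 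Adding the two bounds, $\mathbb{E}\,|X_N-\sum_iY_i|\lesssim_{d,k}\varepsilon N$, which together with $\sum_i\mathbb{E}Y_i=\Theta_{d,k}(N)$ is the claim (integrality issues, e.g.\ $N/m$ not being a perfect $d$-th power, are absorbed into the roundoff above).

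The main obstacle is that the functional $X_N$ is neither monotone nor genuinely subadditive — adding or removing a single point can change the number of components in either direction (cf.\ Figure~\ref{fig:cut}) — so one-sided inequalities are unavailable and every surgery above must be bounded from both sides; the uniform degree bound of Lemma~\ref{lem:deg} is precisely the tool that makes this two-sided control possible. The second delicate point, and the reason $m$ must be allowed to depend on $\varepsilon$, is arranging simultaneously that the buffer volume is $O(\varepsilon)$ and that $Nw^d$ is large, which is what turns the $k$-th nearest-neighbour tail estimate into an $O(\varepsilon N)$ count of bad points.
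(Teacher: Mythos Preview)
Your argument is correct and takes a genuinely different route from the paper's. The paper controls the cross-boundary effect via the deterministic total-length bound of Lemma~\ref{lem:unif}: it makes the buffer strips $c\varepsilon^{-2}$ little cubes wide (i.e.\ width $\sim c\varepsilon^{-2}N^{-1/d}$), so every edge joining two big cubes has at least that length, and since $\sum_{\text{knn}}\|x_i-x_j\|\lesssim N^{(d-1)/d}$ there can be at most $\varepsilon^{2}N$ such edges. You instead control the same effect probabilistically, bounding the number of \emph{bad points} (those whose $k$-th nearest neighbour escapes their piece) by the Poisson tail $\mathbb{P}(k\text{-th NN distance}>w)$ and the Mecke formula, which works once $N\omega_d w^d\sim(\varepsilon/2d)^d m$ is made large by choosing $m$ large. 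Your three-graph surgery $G\to G''\to G'$, with the degree bound of Lemma~\ref{lem:deg} invoked on both sides at each step, makes the two-sided nature of the estimate (forced by the non-monotonicity you correctly flag) very explicit---more so than the paper's sketch. The trade-off is that the paper's route, resting on the deterministic Lemma~\ref{lem:unif}, would transfer to non-Poisson point sets, whereas yours leans on the Poisson structure (tail of the $k$-th NN distance, independence across disjoint regions) but in return is self-contained and does not need Lemma~\ref{lem:unif} at all.
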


\begin{proof}  Recall that Lemma \ref{lem:unif} states that for all sets of points
$$ \sum_{x_i, x_j ~{\tiny \mbox{knn}}}{ \|x_i - x_j\|} \leq  c \cdot n^{\frac{d-1}{d}},$$
where the implicit constant $c$ depends on the dimension and $k$ but on nothing else. Let us now fix $\varepsilon > 0$ and see how to obtain such a decomposition. We start by decomposing the entire cube
into $N$ cubes of width $\sim N^{-1/d}$. This is followed by merging $m$ cubes in a cube-like fashion starting a corner. We then leave a strip of width $c \varepsilon^{-2}$ cubes in all directions and keep
constructing bigger cubes assembled out of $m$ smaller cubes and separated by $c \varepsilon^{-2}$ strips in this manner. We observe that $c \varepsilon^{-2}$ is a fixed constant: in particular, by making $m$ 
sufficiently large, the volume of the big cubes can be made to add up to $1 - \varepsilon^2$ of the total volume.
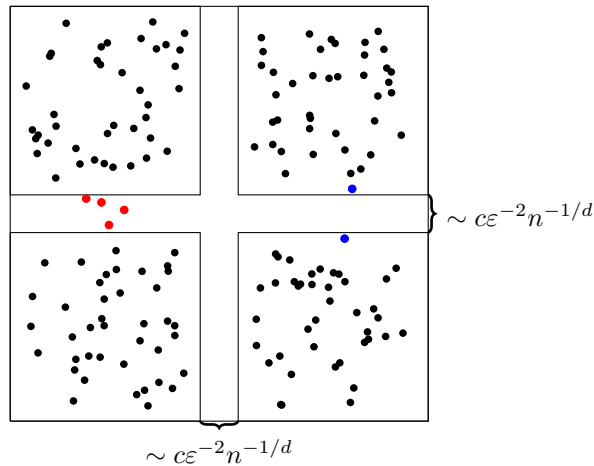
\begin{figure}[h!] \label{fig:sepa}
	\begin{tikzpicture}[scale=1]
	\draw (0,0) -- (5.5,0) -- (5.5,5.5) -- (0,5.5) -- (0,0);
	\draw (0,0) -- (2.5,0) -- (2.5,2.5) -- (0,2.5) -- (0,0); 
	\draw (0,5.5) -- (0,3) -- (2.5,3) -- (2.5,5.5) -- (0,5.5); 
	\draw (3,5.5) -- (3,3) -- (5.5,3) -- (5.5,5.5) -- (3,5.5); 
	\draw (3,0) -- (5.5,0) -- (5.5,2.5) -- (3,2.5) -- (3,0);
\draw [decoration={brace}, decorate,line width=1pt] 
	(5.5,3) -- node[right=3pt] {$\sim c \varepsilon^{-2} n^{-{1/d}}$}(5.5,2.5);
\draw [decoration={brace,mirror}, decorate,line width=1pt] 
	(2.5,0) -- node[below=3pt] {$\sim c \varepsilon^{-2} n^{-1/d}$}(3,0);
\coordinate (A) at (1.5,2.8);
\coordinate (B) at (1.3,2.6);
\coordinate (C) at (1.2,2.9);
\coordinate (D) at (1,2.95);
\filldraw [red,fill=red](A) circle (0.05cm);
\filldraw [red,fill=red](B) circle (0.05cm);
\filldraw [red,fill=red](C) circle (0.05cm);
\filldraw [red,fill=red](D) circle (0.05cm);

\filldraw [blue,fill=blue](4.5,3.08) circle (0.05cm);
\filldraw [blue,fill=blue](4.4,2.42) circle (0.05cm);
\pgfmathsetseed{3}
\foreach \p in {1,...,40}
{ 	\coordinate (penta-\p) at (1.05*rand+1.25,1.05*rand+1.25);
	\fill (penta-\p) circle (0.05);}
\foreach \p in {1,...,40}
{ 	\coordinate (penta-\p) at (1.05*rand+1.25,1.05*rand+1.25+3);
	\fill (penta-\p) circle (0.05);}

\foreach \p in {1,...,40}
{ 	\coordinate (penta-\p) at (1.05*rand+1.25+3,1.05*rand+1.25);
	\fill (penta-\p) circle (0.05);}
\foreach \p in {1,...,40}
{ \coordinate (penta-\p) at (1.05*rand+1.25+3,1.05*rand+1.25+3);
	\fill (penta-\p) circle (0.05);}
\end{tikzpicture}
\caption{The Separation Lemma illustrated for $m=4$.}
\end{figure}
A typical realization of $N$ will now have, if $m$ is sufficiently large, an arbitrarily small portion of points in the strips. These points may add or destroy clusters in the separate cube: in the worst
case, each single point is a connected component in itself (which, of course, cannot happen but suffices for this argument), which would change the total count by an arbitrarily small factor. Or, in the
other direction, these points, once disregarded, might lead to the separation of many connected components; each deleted edge can only create one additional component and each vertex has a uniformly
bounded number of edges, which leads to the same error estimate as above. However, there might also be a separation of edges that connected two points in big $m-$cubes. Certainly, appealing to the total
length, this number will satisfy
$$ \# \left\{ \mbox{edges connected different}~m-~\mbox{cubes} \right\} \leq  \varepsilon^2 \cdot n.$$
Since $\varepsilon$ was arbitrary small, the result follows.
\end{proof}

\subsection{Proof of Theorem 1}
\begin{proof} We first fix the notation: let $X_n$ denote the number of connected components of points drawn from a Poisson process with intensity $n$ in $[0,1]^d$ where each point is connected to its $k-$nearest
neighbors. The proof has several different steps. A rough summary is as follows.

\begin{enumerate}
\item We have already seen that $\varepsilon_{k,d} n \leq  \mathbb{E} X_n \leq
	n$. This implies that if the limit does not exist, then
	$(\mathbb{E}X_n)/n$ is sometimes large and sometimes small.  Corollary
	1 implies that if $(\mathbb{E}X_n)/n$ is small, then $(\mathbb{E}X_{n+m})/(n+m)$
	cannot be much larger as long as $m \ll n$. The scaling shows that $m$
	can actually be chosen to
grow linearly with $n$.  This means that whenever $(\mathbb{E}X_n)/n$ is small, we actually get an entire interval $[n, n + m]$ where that number is rather small and $m$ can grow linearly in $n$.
\item The next step is a decomposition of $[0,1]^d$ into many smaller cubes such that each set has an expected value of $n + m/2$ points. It is easy to see with standard bounds
that most sets will end up with $n+m/2 \pm \sqrt{n + m/2}$ points. Since $m$ can grow linearly with $n$, this is in the interval $[n, n+m]$ with likelihood close to 1.
\item The final step is to show that the sum of the clusters is pretty close to the sum of the clusters in each separate block (this is where the separation Lemma comes in). This then concludes
the argument and shows that for all sufficiently large number of points $N \gg n$, we end up having $\mathbb{E}X_N/N \sim \mathbb{E}X_n/n + \mbox{small error}.$ The small error decreases
for larger and larger values of $n$ and this will end up implying the result.
\end{enumerate}

\textbf{Step 1.} We have already seen that
$$  \varepsilon_{k,d} n \leq  \mathbb{E} X_n \leq n,$$
where the upper estimate is, of course, trivial. 
The main idea can be summarized as follows: if the statement were to fail, then both quantities
$$ \underline{a} := \liminf_{n \rightarrow \infty}{  \frac{ \mathbb{E} X_n}{n}}  \qquad \mbox{and} \qquad \overline{a} := \limsup_{n \rightarrow \infty}{  \frac{ \mathbb{E} X_n}{n}}$$
exist and are positive.
 This implies that we can find arbitrarily large numbers $n$ for which $\mathbb{E}X_n/n$ is quite small (i.e. close to $\underline{a}$). 
 We set, for convenience, $\delta := \overline{a} - \underline{a}$.
By definition, there exist arbitrarily large values of $n$ such that
$$ \mathbb{E} X_n \leq \left( \underline{a} + \frac{\delta}{10} \right)n.$$
 It follows then, from Corollary \ref{cor1}, that
$$ \mathbb{E} X_{n+m} \leq \left( \underline{a} + \frac{\delta}{10} \right)n + c_d  k m \leq   \left( \underline{a} + \frac{\delta}{5} \right)(n+m) \qquad
\mbox{for all} \qquad m \leq \frac{\delta n}{10 c_d k}=: m_0.$$
This means that for all bad values $n$, all the values $n+m$ with $m \lesssim_{d,k, \delta} n$ are still guaranteed to be very far away from achieving any value close to $\overline{a}$. 
We also note explicitly that the value of $m$ can be chosen as a fixed proportion of $n$ independently of the size of $n$, i.e. $m$ is growing lineary with $n$.\\

\textbf{Step 2.}
Let us now consider a Poisson distribution $P_{\lambda}$ with intensity $\lambda$ being given by $ \lambda = n + m_0/2.$ It is easy to see that, for every
$\varepsilon > 0$ and all $n$ sufficiently large (depending only on $\varepsilon$)
$$ \mathbb{P}(n \leq P_{\lambda} \leq n+m_0) = e^{-\lambda} \sum_{i=n}^{n+m_0}{ \frac{\lambda^i}{i!}} \geq 1 - \varepsilon.$$
This follows immediately from the fact that the variance is $\lambda$ and the
classical Tschebyscheff inequality arguments. Indeed, much stronger results are
true since the scaling of a standard deviation is at the square root of the
sample size and we have an interval growing linearly in the sample size --
moreover, there are large deviation tail bounds, so one could really show
massively stronger quantitative results but these are not required here. We now
set $\varepsilon = \delta/100$ and henceforth only consider values of $n$ that
are so large that the above inequality holds.\\

\textbf{Step 3.}  When dealing with a Poisson process of intensity $N \gg n$, we can decompose, using the Separation Lemma, the unit cube $[0,1]^d$ into disjoint, separated cubes with volume $\sim n/N$ with a volume error of size $ \sim N^{\frac{1}{d}} \ll N$ (due to not enough little cubes fitting exactly). When considering the effect of this Poisson process inside a small cube, we see that with very high likelihood ($1-\varepsilon$), the number of points is in the interval
$[n, n + m_0]$. The Separation Lemma moreover guarantees that the number of points that end up between the little cubes (`fall between the cracks') is as small a proportion of $N$ as we wish provided
$n$ is sufficiently large. Let us now assume that the total number of connected components among the $N$ points is exactly the same as the sum of the connected components in the little cubes. Then
we would get that
\begin{align*}
 \mathbb{E} X_N \leq  \left(\underline{a} + \frac{\delta}{5}\right)N.
\end{align*}
This is not entirely accurate: there are $\varepsilon_2 N$ points that fell between the cracks (with $\varepsilon_2$ sufficiently small if $n$ is sufficiently large) and there are $\varepsilon N$ points that end
up in cubes that have a total number of points outside the $[n, n+m_0]$ regime. However, any single point may only add $c_d k$ new clusters and thus
\begin{align*}
 \mathbb{E} X_N \leq  \left(\underline{a} + \frac{\delta}{5}\right)N + c_d k \left(\varepsilon +\varepsilon_2\right)N
\end{align*}
and by making $\varepsilon+\varepsilon_2 \leq \delta/100$ (possibly by increasing $n$), we obtain
$$\overline{a} \leq \underline{a} + \frac{2 \delta}{5},$$
which is a contradiction, since $\delta = \overline{a} - \underline{a}$.
\end{proof}

\section{Proof of Theorem 2}

\subsection{The Erd\H{o}s-Renyi Lemma.} Before embarking on the proof, we describe a short statement. The proof
is not subtle and follows along completely classical lines but occurs in an unfamiliar regime: we are interested in ensuring that the likelihood of obtaining a disconnected
graph is very small. The subsequent
argument, which is not new but not easy to immediately spot in the literature, is included for the convenience of the reader (much stronger and more subtle results usually 
focus on the threshold $p = (1 \pm \varepsilon) (\log{n})/n$).
\begin{lemma}  \label{lem:erdren} Let $G(n,p)$ an Erd\H{o}s-Renyi graph with $p > 10\log{n}/n$
 Then, for $n$ sufficiently large,
$$ \mathbb{P}\left( G(n,p)~\mbox{is disconnected}\right) \lesssim e^{-pn/3}.$$
\end{lemma}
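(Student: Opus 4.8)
The plan is to run the standard first-moment bound over vertex cuts, but to keep the exponential factor $e^{-pn/3}$ visible throughout rather than absorbing everything into fixed powers of $n$. The reason for the care is exactly the one the authors hint at: the estimate has to hold for every admissible $p \ge 10\log n/n$, not merely for $p$ near the connectivity threshold $p \sim (\log n)/n$.

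First I would record the deterministic fact that a graph on $n$ vertices is disconnected precisely when there is a set $S$ of vertices with $1 \le |S| \le \lfloor n/2 \rfloor$ having no edge to its complement (take the smaller side of some connected component). A union bound over all such $S$, using that the $|S|(n-|S|)$ potential crossing edges are present independently with probability $p$, gives
$$ \mathbb{P}\left(G(n,p)~\text{disconnected}\right) \le \sum_{s=1}^{\lfloor n/2 \rfloor}\binom{n}{s}(1-p)^{s(n-s)}. $$
Next I would estimate a single summand: with $1-p \le e^{-p}$, with $n-s \ge n/2$ (valid since $s \le n/2$), and with $\binom{n}{s}\le n^s$, the $s$-th term is at most $n^s e^{-psn/2}$. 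Now I would extract the target quantity by writing $n^s e^{-psn/2} = e^{-pn/3}\, n^{s} e^{-pn(s/2-1/3)}$ and observing that $s/2 - 1/3 \ge s/6$ for every integer $s \ge 1$; since $pn \ge 10\log n$, this yields $n^{s}e^{-pn(s/2-1/3)} \le n^{s}e^{-(pn/6)s} \le n^{s}n^{-(5/3)s} = n^{-(2/3)s}$. Summing the resulting geometric series,
$$ \sum_{s=1}^{\lfloor n/2\rfloor}\binom{n}{s}(1-p)^{s(n-s)} \le e^{-pn/3}\sum_{s\ge 1} n^{-(2/3)s} = e^{-pn/3}\,\frac{n^{-2/3}}{1-n^{-2/3}}, $$
and the last factor tends to $0$ as $n\to\infty$, so it is $\le 1$ for $n$ large. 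This proves $\mathbb{P}(G(n,p)~\text{disconnected}) \lesssim e^{-pn/3}$, with plenty of room to spare.

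There is no genuine obstacle here; the only thing to watch — and presumably why the lemma is singled out — is the uniformity in $p$. Bounding each summand crudely by a fixed negative power of $n$ would be wasteful and would fail to produce an $e^{-pn/3}$ factor once $p$ is substantially larger than $(\log n)/n$, so the factor must be pulled out \emph{before} the summation and the hypothesis $pn \ge 10\log n$ applied only afterwards. The constant $3$ in the exponent is not optimal: a slightly more careful treatment of the term $s=1$, where $n-s = n-1$ rather than merely $n-s \ge n/2$, pushes the exponent toward $e^{-(1-\varepsilon)pn}$ for any fixed $\varepsilon > 0$, which is essentially best possible for a union-bound argument (it already sees the expected number $n(1-p)^{n-1}$ of isolated vertices).
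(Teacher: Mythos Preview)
Your proof is correct and follows essentially the same approach as the paper: a union bound over vertex cuts of size $1 \le s \le n/2$, then a geometric-series estimate on $\sum \binom{n}{s}(1-p)^{s(n-s)}$. Your execution is in fact a bit cleaner than the paper's --- you use the cruder $\binom{n}{s}\le n^s$ and factor out $e^{-pn/3}$ explicitly before summing, which makes the uniformity in $p$ transparent and avoids the slightly loose bookkeeping in the paper's final step.
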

\begin{proof} It suffices to bound the likelihood of finding an isolated set of $k$ vertices from above, where $1 \leq k \leq n/2$. For any fixed set
of $k$ vertices of it being isolated is bounded from above by
$$ \mathbb{P}\left(\mbox{fixed set of}~k~\mbox{vertices being disconnected}\right) \leq (1-p)^{k(n-k)}$$
and thus, using the union bound,
$$ \mathbb{P}\left( G(n,p)~\mbox{is connected}\right) \leq  \sum_{k=1}^{n/2}{  \binom{n}{k}(1-p)^{k(n-k)}}.$$
We use
$$ \binom{n}{k} \leq \left( \frac{ n e }{k}\right)^k$$
to rewrite the expression as 
\begin{align*}
 \sum_{k=1}^{n/2}{  \binom{n}{k}(1-p)^{k(n-k)}} &\leq \sum_{k=1}^{n/2}{ e^{k + k \log{n} + k \log{k} + \left[\log{(1-p)}\right] k (n-k)}} \\
&\leq  \sum_{k=1}^{n/2}{ e^{k \left(3  \log{n} + \left[\log{(1-p)}\right]  (n-k)\right) }} \\
&\leq  \sum_{k=1}^{n/2}{ e^{k \left(3  \log{n} + \left[\log{(1-p)}\right]  (n/2)\right) }}\\
&\lesssim e ^{3 \log{n} + \left[\log{(1-p)}\right] n/2},
\end{align*}
where the last step is merely the summation of a geometric series and valid as soon as
$$ 3  \log{n} + \left[\log{(1-p)}\right] \frac{n}{2} <  0,$$
which is eventually true for $n$ sufficiently large since $p > 10\log{n}/n$.
\end{proof}

\subsection{A Mini-Percolation Lemma}
The purpose of this section is to derive rough bounds for a percolation-type problem.
\begin{lemma} \label{lem:perc}
Suppose we are given a  grid graph on $\left\{1,2,\dots,n\right\}^d$ and remove each of the $n^d$ points with likelihood $p = (\log{n})^{-c}$ for some $c>0$. Then, for $n$ sufficiently large, 
there is a giant component with expected size $n^d - o(n^d)$.
\end{lemma}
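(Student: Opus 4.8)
The plan is to reduce the statement, for $d \ge 2$, to a classical fact about supercritical Bernoulli site percolation by a block renormalization, using crucially that the deletion probability $p = (\log n)^{-c}$ tends to $0$. (For $d = 1$ the statement is false: a path with a vanishing density of deleted vertices still has its longest surviving run of length only $\sim (\log n)^{c+1} = o(n)$; so we take $d \ge 2$.) Fix a large constant $L \in \mathbb{N}$ and tile $\{1,\dots,n\}^d$ by $M^d$ axis-parallel sub-cubes of side $L$, with $M = \lfloor n/L \rfloor$; the $O(L\,n^{d-1}) = o(n^d)$ vertices not covered are discarded. Call a sub-cube \emph{good} if none of its $L^d$ vertices was deleted. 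The sub-cubes being disjoint and the deletions independent, the sub-cube states are independent with
$$ \mathbb{P}(\text{sub-cube bad}) = 1 - (1-p)^{L^d} \le L^d p =: q = q(n) \longrightarrow 0 . $$

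The elementary point is that two adjacent good sub-cubes are connected in the surviving grid graph, since their union is a connected subgraph all of whose vertices survived. Hence any connected cluster of good sub-cubes — regarded as a subset of the block lattice $\{1,\dots,M\}^d \subset \mathbb{Z}^d$ — corresponds to a connected set of surviving vertices of the original grid of size at least $L^d$ times the number of blocks in the cluster. It therefore suffices to prove that the good sub-cubes, which form a site percolation on $\{1,\dots,M\}^d$ with retention probability $1-q$ and $q \to 0$, contain with probability $1-o(1)$ a single cluster that omits only an expected $o(M^d)$ fraction of the blocks.

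This is now classical supercritical percolation (eventually $q < 1 - p_c(\mathbb{Z}^d)$). For the \emph{leakage}, a Peierls/contour estimate suffices: a block fails to lie in a large cluster only if it is closed (probability $q$) or open but enclosed by a $\ast$-connected surface of closed blocks, and the latter has probability at most $\sum_{s \ge s_0(d)} (C(d)q)^s \le C'(d)\,q$; adding the $O(M^{d-1})$ boundary blocks, the expected number of blocks not in a large cluster is $O(qM^d) + O(M^{d-1}) = o(M^d)$. For \emph{uniqueness} — that the large clusters are in fact a single cluster $\mathcal{C}^\ast$ — one invokes the standard supercritical-percolation statement that in a box of side $M$ there is, with probability $1-o(1)$, a unique cluster of size $\ge \delta M^d$, of density $\theta(1-q)\,(1+o(1))$; since $\theta(1-q) \to 1$ as $q \to 0$, this cluster omits only $o(M^d)$ blocks in expectation. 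Translating back, with probability $1-o(1)$ the surviving grid has a component of size $\ge L^d(M^d - o(M^d)) = n^d - o(n^d)$, and on the complementary event of probability $o(1)$ the component has size $\ge 0$; hence its \emph{expected} size is $n^d - o(n^d)$.

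The only genuinely non-routine ingredient is the uniqueness step: the contour bound by itself shows merely that most blocks lie in large clusters, not that the large clusters coincide, so a priori the box could break into several macroscopic pieces. Its resolution is the finite-volume renormalization machinery standard in the theory of supercritical percolation, which is particularly cheap in our setting because $q \to 0$ makes every relevant blocking surface of closed blocks exponentially improbable; the block coarse-graining, the contour count, and the disposal of the uncovered and boundary vertices are all bookkeeping.
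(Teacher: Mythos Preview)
Your proposal is correct and takes a different route from the paper's.

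The paper gives a direct, self-contained combinatorial argument. It first proves as a separate lemma that the number of $\ast$-connected lattice animals of size $\ell$ in $\{1,\dots,n\}^d$ is at most $n^d(2^{3^d-1})^\ell$, via an explicit depth-first-search encoding. It then observes that a surviving vertex is cut off from the bulk only if it is enclosed by such a $\ast$-connected set of deleted vertices, and that by isoperimetry a set of $\ell$ deleted vertices can enclose at most $\lesssim_d \ell^{d/(d-1)} \le \ell^2$ survivors. Summing $\sum_\ell n^d(2^{3^d-1})^\ell p^\ell \ell^2$ with $p=(\log n)^{-c}$ yields the explicit deficit bound $O(n^d/(\log n)^c)$.

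Your argument instead reduces to supercritical Bernoulli site percolation and invokes the finite-box uniqueness of the giant cluster as a black box. This is conceptually clean, and your remark that the case $d=1$ fails is a good one the paper does not make explicit. Two minor points: the block renormalization to scale $L$ is harmless but unnecessary here, since the deletion probability already tends to $0$ and one may simply take $L=1$; and your Peierls contour estimate for the leakage is essentially the same computation as the paper's lattice-animal count, so the only genuinely additional ingredient you import is the uniqueness theorem. The trade-off is that your proof is not self-contained---uniqueness of the macroscopic cluster in a box is itself a nontrivial result from percolation theory---and, as written, does not immediately deliver the quantitative $O(n^d/(\log n)^c)$ rate that falls out of the paper's elementary sum.
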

The problem seems so natural that some version of it must surely be known. It seems to be dual to classical percolation problems (in the sense that one randomly deletes vertices instead
of edges). It is tempting to believe that the statement remains valid for $p$ all the way up to some critical exponent $0 < p_{crit} < 1$ that depends on the dimension (and grows as the dimension gets larger).
Before embarking on a proof, we show a separate result. We will call a subset $A \subset \left\{1, 2, \dots, n \right\}^d$ \textit{connected} if the resulting graph is connected: here, edges are given by
connecting every node to all of its adjacent nodes that differ by at most one in each coordinate (that number is bounded from above by $3^{d}-1$).

\begin{lemma} The number of connected components $A$ in the grid graph over $\left\{1,2,\dots,n\right\}^d$ with cardinality $|A| = \ell$ is bounded from above
$$ \# \mbox{number of connected components of size}~\ell  \leq n^d \left(2^{3^d -1}\right)^{\ell}.$$
\end{lemma}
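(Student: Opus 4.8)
The statement I want is a Cayley-style counting bound: the number of connected subsets of size $\ell$ in the grid graph on $\{1,\dots,n\}^d$ (where adjacency is the "king move" relation, so each vertex has at most $3^d-1$ neighbors) is at most $n^d (2^{3^d-1})^\ell$. The natural approach is a spanning-tree / growth-process argument. I would first observe that every connected set $A$ of size $\ell$ has a spanning tree, and I can encode $A$ by recording how that tree is explored from a chosen root; since there are at most $n^d$ choices of root, the factor $n^d$ is accounted for, and it remains to bound the number of connected sets through a fixed vertex by $(2^{3^d-1})^\ell$.

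For the core bound I would run a breadth-first (or depth-first) exploration of the spanning tree rooted at the fixed vertex $v_0$. At each explored vertex, its children in the tree form a subset of its at most $3^d-1$ grid-neighbors, and the number of subsets of a set of size $3^d-1$ is exactly $2^{3^d-1}$. So I can describe the entire tree — hence $A$ — by specifying, for each of the $\ell$ vertices in the order it is discovered, which subset of its neighbors are its children. This gives at most $(2^{3^d-1})^\ell$ descriptions, and the map from descriptions to connected sets containing $v_0$ is surjective; multiplying by the $n^d$ choices of $v_0$ yields the claimed bound. (One can be slightly more careful — some of a vertex's neighbors are already in the tree and cannot be re-added — but since we only need an upper bound, allowing the wasteful "all $2^{3^d-1}$ subsets" choice at every step is harmless and keeps the argument clean.)

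The only mild subtlety — and the step I would be most careful about — is making sure the encoding is genuinely well-defined and surjective onto all connected sets of size $\ell$, i.e. that from the sequence of neighbor-subset choices one can reconstruct a set and that every connected $A$ arises this way. This is handled by fixing, once and for all, a deterministic ordering of the (at most $3^d-1$) neighbor directions, so that "the $i$-th neighbor of a vertex" is unambiguous; then the exploration is fully determined by the choices recorded, and choosing the spanning tree of $A$ and exploring it reproduces $A$. There is nothing deep here — it is the standard argument that the number of connected subgraphs of size $\ell$ in a graph of maximum degree $\Delta$ through a fixed vertex is at most $(2^{\Delta})^{\ell}$ (indeed one can even get the sharper $(e\Delta)^\ell$ by counting trees directly, but the crude bound suffices for the percolation application in Lemma \ref{lem:perc}). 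I would present it at the level of "encode the tree by its branching pattern, count the patterns," without grinding through a formal induction.
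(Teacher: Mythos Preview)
Your proposal is correct and is essentially the same argument as the paper's: the paper also encodes a connected set by choosing an initial vertex (the $n^d$ factor) and then running a depth-first exploration, recording at each of the $\ell$ vertices the subset of its $3^d-1$ neighbor directions that are new, which yields the $(2^{3^d-1})^{\ell}$ factor. The only cosmetic difference is that you phrase the exploration in terms of a spanning tree and allow BFS or DFS, whereas the paper fixes DFS and illustrates the encoding with a worked $d=2$ example.
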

\begin{proof} The proof proceeds in a fairly standard way by constructing a combinatorial encoding. We show how this is done in two dimensions, giving an upper bound of $n^2 256^{\ell}$ -- the
construction immediately transfers to higher dimensions in the obvious way. 
\begin{center}
\begin{figure}[h!] \label{fig:perc}
\begin{tikzpicture}[scale=3]
\draw [ultra thick] (0,0) -- (0,1) -- (1,1) -- (1,0) -- (0,0);
\draw [ultra thick] (0,0.33) -- (1,0.33);
\draw [ultra thick] (0,0.66) -- (1,0.66);
\draw [ultra thick] (0.33,0) -- (0.33,1);
\draw [ultra thick] (0.66,0) -- (0.66,1);
\node at (0.15, 0.15) {7};
\node at (0.15, 0.5) {8};
\node at (0.15, 0.85) {1};
\node at (0.5, 0.85) {2};
\node at (0.85, 0.85) {3};
\node at (0.85, 0.5) {4};
\node at (0.85, 0.15) {5};
\node at (0.5, 0.15) {6};
\end{tikzpicture}
\caption{Fixing labels for the 8 immediate neighbors.}
\end{figure}
\end{center}
The encoding is given by a direct algorithm.
\begin{enumerate}
\item Pick an initial vertex $x_0 \in A$. Describe which of the 8 adjacent squares are occupied by picking a subset of $\left\{1,2,\dots, 8\right\}$.
\item Implement a depth-first search as follows: pick the smallest number in the set attached to $x_0$ and describe its neighbors, if any, that
are distinct from previously selected nodes as a subset of $\left\{1,2,\dots, 8\right\}$.
\item Repeat until all existing neighbors have been mapped out (the attached set is the empty set) and then go back and describe the next branch. 
\end{enumerate}
Just for clarification, we quickly show the algorithm in practice. Suppose we are given an initial point $x_0$ and the sequence of sets
$$ \left\{4,5\right\}, \left\{3,4\right\}, \left\{\right\}, \left\{\right\}, \left\{5\right\}, \left\{4\right\}, \left\{\right\},$$
then this uniquely identifies the set showing in Figure \ref{fig:reconstruct}.

\begin{center}
\begin{figure}[h!] 
\begin{tikzpicture}[scale=0.4]
\draw[fill=gray!60] (17+7,0) rectangle (18+7,1);
\draw[fill=gray!60] (17,0) rectangle (18,1);
\draw[fill=gray!60] (17-7,0) rectangle (18-7,1);
\draw[fill=gray!60] (17-14,0) rectangle (18-14,1);
\draw[fill=gray!60] (16,0) rectangle (17,1);
\draw[fill=gray!60] (16-7,0) rectangle (17-7,1);
\draw[fill=gray!60] (16-14,0) rectangle (17-14,1);
\draw[fill=gray!60] (16-7,2) rectangle (17-7,3);
\draw[fill=gray!60] (16-7,3) rectangle (17-7,4);
\draw[fill=gray!60] (16-14,2) rectangle (17-14,3);
\draw[fill=gray!60] (16-14,3) rectangle (17-14,4);
\draw[fill=gray!60]  (1,1) rectangle (2,2);
\draw[fill=gray!60]  (1,2) rectangle (2,3);
\draw[step=1cm, ultra thick] (0,0) grid (5,5);
\draw[step=1cm, ultra thick] (7,0) grid (12,5);
\draw[step=1cm, ultra thick] (14,0) grid (19,5);
\draw[step=1cm, ultra thick] (21,0) grid (26,5);
\draw[step=1cm, ultra thick] (28,0) grid (33,5);
\draw[fill=black] (0,2) rectangle (1,3);
\draw[fill=black] (7,2) rectangle (8,3);
\draw[fill=black] (14,2) rectangle (15,3);
\draw[fill=black] (21,2) rectangle (22,3);
\draw[fill=black] (28,2) rectangle (29,3);
\draw[fill=black] (8,2) rectangle (9,3);
\draw[fill=black] (8,1) rectangle (9,2);
\draw[fill=black] (15,2) rectangle (16,3);
\draw[fill=black] (15,1) rectangle (16,2);
\draw[fill=black] (22,2) rectangle (23,3);
\draw[fill=black] (22,1) rectangle (23,2);
\draw[fill=black] (29,2) rectangle (30,3);
\draw[fill=black] (29,1) rectangle (30,2);
\draw[fill=black] (16,2) rectangle (17,3);
\draw[fill=black] (16,3) rectangle (17,4);
\draw[fill=black] (16+7,2) rectangle (17+7,3);
\draw[fill=black] (16+7,3) rectangle (17+7,4);
\draw[fill=black] (16+14,2) rectangle (17+14,3);
\draw[fill=black] (16+14,3) rectangle (17+14,4);
\draw[fill=black] (16+7,0) rectangle (17+7,1);
\draw[fill=black] (16+14,0) rectangle (17+14,1);
\draw[fill=black] (17+14,0) rectangle (18+14,1);
\end{tikzpicture}
\caption{The starting point followed by generating the connected component described by the sequence of sets $ \left\{4,5\right\}, \left\{3,4\right\}, \left\{\right\}, \left\{\right\}, \left\{5\right\}, \left\{4\right\}, \left\{\right\}$. }
\label{fig:reconstruct}
\end{figure}
\end{center}
Clearly, this description returns $\ell$ subsets of $\left\{1,\dots, 8\right\}$ of which there are 256. Every element in $A$ generates exactly one such subset and every connected component
can thus be described by giving the $n^d$ initial points and then a list of $\ell$ subsets of $\left\{1,\dots, 8\right\}$. This implies the desired statement; we note that the actual number should
be much smaller since this way of describing connected components has massive amounts of redundancy and overcounting. 
\end{proof}

\begin{proof}[Proof of Lemma \ref{lem:perc}] The proof is actually fairly lossy and proceeds by massive overcounting. The only way to remove mass from the giant block is to remove points in an organized manner: adjacent squares have to be removed in a way that encloses a number of squares that are not removed (see Fig. \ref{fig:perc}).

\begin{center}
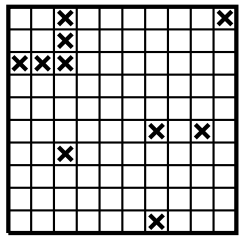
\begin{figure}[h!] \label{fig:perc}
\begin{tikzpicture}[scale=3]
\draw [ultra thick] (0,0) -- (0,1) -- (1,1) -- (1,0) -- (0,0);
\draw [thick] (0,0.1) -- (1,0.1);
\draw [thick] (0,0.2) -- (1,0.2);
\draw [thick] (0,0.3) -- (1,0.3);
\draw [thick] (0,0.4) -- (1,0.4);
\draw [thick] (0,0.5) -- (1,0.5);
\draw [thick] (0,0.6) -- (1,0.6);
\draw [thick] (0,0.7) -- (1,0.7);
\draw [thick] (0,0.8) -- (1,0.8);
\draw [thick] (0,0.9) -- (1,0.9);
\draw [thick] (0.1,0) -- (0.1,1);
\draw [thick] (0.2,0) -- (0.2,1);
\draw [thick] (0.3,0) -- (0.3,1);
\draw [thick] (0.4,0) -- (0.4,1);
\draw [thick] (0.5,0) -- (0.5,1);
\draw [thick] (0.6,0) -- (0.6,1);
\draw [thick] (0.7,0) -- (0.7,1);
\draw [thick] (0.8,0) -- (0.8,1);
\draw [thick] (0.9,0) -- (0.9,1);
\draw [ultra thick] (0.92, 0.92) -- (0.98, 0.98);
\draw [ultra thick] (0.98, 0.92) -- (0.92, 0.98);
\draw [ultra thick] (0.22, 0.92) -- (0.28, 0.98);
\draw [ultra thick] (0.22, 0.98) -- (0.28, 0.92);
\draw [ultra thick] (0.22, 0.82) -- (0.28, 0.88);
\draw [ultra thick] (0.22, 0.88) -- (0.28, 0.82);
\draw [ultra thick] (0.22, 0.72) -- (0.28, 0.78);
\draw [ultra thick] (0.22, 0.78) -- (0.28, 0.72);
\draw [ultra thick] (0.12, 0.72) -- (0.18, 0.78);
\draw [ultra thick] (0.12, 0.78) -- (0.18, 0.72);
\draw [ultra thick] (0.02, 0.72) -- (0.08, 0.78);
\draw [ultra thick] (0.02, 0.78) -- (0.08, 0.72);
\draw [ultra thick] (0.62, 0.42) -- (0.68, 0.48);
\draw [ultra thick] (0.62, 0.48) -- (0.68, 0.42);
\draw [ultra thick] (0.22, 0.32) -- (0.28, 0.38);
\draw [ultra thick] (0.22, 0.38) -- (0.28, 0.32);
\draw [ultra thick] (0.82, 0.42) -- (0.88, 0.48);
\draw [ultra thick] (0.82, 0.48) -- (0.88, 0.42);
\draw [ultra thick] (0.62, 0.02) -- (0.68, 0.08);
\draw [ultra thick] (0.62, 0.08) -- (0.68, 0.02);
\end{tikzpicture}
\caption{Removing tiny squares randomly: this random sample ends up removing a bit more from the giant component but is quite unlikely.}
\end{figure}
\end{center}
The next question is how many other points can possibly be captured by a connected component on $\ell-$nodes. The isoperimetric principle implies
$$ \# \mbox{blocks captured by}~\ell~\mbox{nodes} \lesssim_d \ell^{\frac{d}{d-1}} \leq \ell^2.$$
Altogether, this implies we expect to capture at most
\begin{align*}
\sum_{\ell=1}^{n^d} n^d \left(2^{3^d -1}\right)^{\ell} \left( \log{n}\right)^{-c \ell}  \ell^2 \leq n^d  \sum_{\ell=1}^{\infty}\left( \frac{ 2^{3^d -1}}{ (\log{n})^{c}}\right)^{\ell}   \ell^2 \lesssim  \frac{ n^d}{(\log{n})^c},
\end{align*}
where the last inequality holds as soon as $\log{n}^c \gg 2^{3^d-1}$ and follows from the derivative geometric series
$$ \sum_{\ell=1}^{\infty}{ \ell^2 q^{\ell} } = \frac{q(1+q)}{(1-q)^3} \qquad \mbox{whenever}~|q| < 1.$$
\end{proof}

\textit{Remark.} There are two spots where the argument is fairly lossy. First of all, every connected component on $\ell$ nodes is, generically, counted as $\ell$ connected components of length
$\ell -1$, as $\sim \ell^2$ connected components of size $\ell - 2$ and so on. The second part of the argument is the application of the isoperimetric inequality: a generic connected component
on $\ell$ nodes will capture $ \ll \ell^{2}$ other nodes. These problems seem incredibly close to existing research and it seems likely that they either have been answered already or that techniques
from percolation theory might provide rather immediate improvements.

\subsection{Outline of the Proof} 
The proof proceeds in three steps.

\begin{enumerate}
\item Partition the unit cube into smaller cubes such that each small cube has an expected number of $\sim \log{n}$ points (and thus, the number of cubes is $\sim n/\log{n}$). Show that the likelihood of a single cube containing significantly more or significantly less points is small.
\item Show that graphs within the cube are connected with high probability.
\item Show that there are connections between the cubes that ensure connectivity.
\end{enumerate}

\subsection{Step 1.}
We start by partitioning $[0,1]^d$ in the canonical manner into axis-parallel cubes having side-length $\sim \left(c\log{n}/n\right)^{1/d}$ for some constant $c$ to be chosen later. There are
roughly $\sim n/(c \log{n})$ cubes and they have measure $\sim c \log{(n)}/n$. 
We start by bounding the likelihood of a one such cube containing $\leq \log{n}/100$ points.
Clearly, this likelihood can be written as a Bernoulli random variables 
$$ \mbox{number of points in cube} = \mathcal{B}\left(n, \frac{c\log{n}}{n}\right).$$
The Chernoff-Hoeffding theorem \cite{hoeffding}  implies
$$ \mathbb{P}\left( \mathcal{B}\left(n, \frac{c\log{n}}{n}\right) \leq \frac{\log{n}}{100} \right) \leq \exp\left( - n D\left(\frac{\log{n}}{100n} || \frac{c \log{n}}{n} \right)\right),$$
where $D$ is the relative entropy
$$ D(a || b) = a \log{\frac{a}{b}} + (1-a) \log{\frac{1-a}{1-b} }.$$
Here, we have, for $n$ large,
$$D\left(\frac{\log{n}}{100n} || \frac{c \log{n}}{n} \right) \sim \frac{\log{n}}{n}\left(c - \frac{1}{100} + \frac{1}{100}\log{\frac{1}{100c}}\right).$$
This implies that for $c$ sufficiently large, we have
\begin{align*}
\mathbb{P}\left(\mbox{fixed cube has less than}~\frac{\log{n}}{100}~\mbox{points}\right) \lesssim_{c, \varepsilon} \frac{1}{n^{c-\varepsilon}}
\end{align*}
and the union bound implies
$$ \mathbb{P}\left(\mbox{there exists cube that has less than}~\frac{\log{n}}{100}~\mbox{points}\right) \lesssim_{c, \varepsilon} \frac{1}{n^{c - 1 -\varepsilon}}$$
The same argument also shows that
$$
\mathbb{P}\left(\mbox{exists cube with more than}~10c \log{n}~\mbox{points}\right) \lesssim \frac{1}{n^{c}}.
$$
This means we have established the existence of a constant $c$ such that with likelihood tending to 1 as $n \rightarrow \infty$ (at arbitrary inverse polynomial speed provided
$c$ is big enough)
$$ \forall ~\mbox{cubes}~Q  \qquad \qquad \frac{\log{n}}{100} \leq  \#\left\{\mbox{points in}~Q\right\} \leq 10 c \log{n}.$$
We henceforth only deal with cases where these inequalities are satisfied for all cubes.

\subsection{Step 2.}  
We now study what happens within a fixed cube $Q$. The cube is surrounded by at most $3^d-1$ other cubes each of which contains at most
$10c \log{n}$ points. This means that if, for any $x \in Q$, we compile a list of its $3^d 10 c \log{n}$ nearest neighbours, we are guaranteed
that every other element in $Q$ is on that list. Let us suppose that the rule is that each point is connected to each of its $3^d 10 c \log{n}-$nearest
neighbors with likelihood
$$ p = \frac{m}{3^d 10 c \log{n}}.$$
Then, Lemma \ref{lem:erdren} implies that for $m \gtrsim 10 \log{\left(3^d 10 c \log{(n)}\right)} \sim_{d,c} \log \log{n}$ the likelihood of obtaining a connected 
graph strictly within $Q$ is at least $(\log{n})^{-c}$. Lemma \ref{lem:perc} then implies the result provided we can ensure that points in cubes
connect to their neighboring cubes.

\subsection{Step 3.} We now establish that the likelihood of a cube $Q$ having, for every adjacent cube $R$, a point that connects to a point in $R$
is large. The adjacent cube has $\sim \log{n}$ points. The likelihood of a fixed point in $Q$ not connecting to any point in $R$ is
$$ \leq  \left( 1 - \frac{\frac{\log{n}}{100}}{ 3^d 10c \log{n}} \right)^{c\log{\log{n}}}   = \left( 1 - \frac{1}{3^d 1000c} \right)^{c\log{\log{n}}} \lesssim  \left(\log{n}\right)^{-\varepsilon_{c,d}}.$$
The likelihood that this is indeed true for every point is then bounded from above by
$$  \left(\log{n}\right)^{-\varepsilon_{c,d} \log{n}/100}  \lesssim n^{-1},$$
which means, appealing again to the union bound, that this event occurs with a likelihood going to 0 as $n \rightarrow \infty$. $\qed$\\

\textbf{Connectedness.} It is not difficult to see that this graph is unlikely to be connected. For a fixed vertex $v$, there are $\sim c \log{n}$
possible other vertices it could connect to and $\sim c \log{n}$ other vertices might possibly connect to $v$. Thus
$$ \mathbb{P}\left(v~\mbox{is isolated}\right) \lesssim \left(1 - \frac{c_2 \log{\log{n}}}{\log{n}}\right)^{c_3 \log{n}} \leq e^{-c_2 c_3 \log{\log{n}}} = \frac{1}{(\log{n})^{c_2 c_3}}.$$
This shows that we can expect at least $n \left(\log{n}\right)^{-c_2 c_3}$ isolated vertices. This also shows that the main obstruction to connectedness
is the nontrivial likelihood of vertices not forming edges to other vertices. This suggests a possible variation of the graph construction that is discussed
in the next section.

\section{An Ulam-type modification}

There is an interesting precursor to the Erd\"os-Renyi graph that traces back to a question of Stanislaw Ulam in the \textit{Scottish Book}.
\begin{quote}
\textbf{Problem 38: Ulam.}  
Let there be given $N$ elements (persons). To each element we attach $k$ others
among the given $N$ at random (these are friends of a given person). What is the
probability $\mathbb{P}_k(N)$ that from every element one can get to every other element through
a chain of mutual friends? (The relation of friendship is not necessarily symmetric!)
Find $\lim_{N\rightarrow \infty} \mathbb{P}_{k}(N)$ (0 or 1?). (Scottish Book, \cite{scot})
\end{quote}

 We quickly establish a basic variant of the Ulam-type question sketched in the introduction since the argument
itself is rather elementary. It is a natural variation on the Ulam question (friendship now being symmetric) and the usual Erd\"os-Renyi
argument applies. A harder problem (start by constructing a directed graph, every vertex forms an outgoing edge to $k$ other randomly
chosen vertices, and then construct an undirected graph by including edges where both $uv$ and $vu$ are in the directed graph)
was solved by Jungreis \cite{scot}.
\begin{quote}
\textbf{Question.} If we are given $n$ randomly chosen points in $[0,1]^d$ and connect each vertex to exactly $c_1 \log{\log{n}}$ of its
$c_2 \log{n}$ nearest neighbors, is the arising graph connected with high probability?
\end{quote}

We have the following basic Lemma that improves on the tendency of Erd\H{o}s-Renyi graphs to form
small disconnected components.

\begin{lemma} \label{lem:ulam} If we connect each of $n$ vertices to exactly $k$ other randomly chosen vertices, then
$$ \mathbb{P}\left(\mbox{Graph is disconnected}\right) \lesssim_k \frac{1}{n^{(k-1)(k+1)}}.$$
\end{lemma}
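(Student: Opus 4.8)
The plan is a first-moment (union bound) argument. In this model each of the $n$ vertices independently picks a uniformly random set of $k$ distinct neighbours among the other $n-1$ vertices, so the choices made by distinct vertices are independent. If the resulting undirected graph is disconnected, there is a set $S$ of vertices with $1\le |S|=s\le n/2$ such that no edge joins $S$ to its complement $S^{c}$; moreover, if $|S|\le k$ then some vertex of $S$ is forced to have one of its $k$ distinct neighbours outside $S$, so necessarily $s\ge k+1$. For a fixed $S$ with $|S|=s$, the event that no edge joins $S$ to $S^{c}$ is exactly that every vertex of $S$ chose all $k$ of its neighbours inside $S$ and every vertex of $S^{c}$ chose all $k$ of its neighbours inside $S^{c}$; by independence this has probability
$$\left(\frac{\binom{s-1}{k}}{\binom{n-1}{k}}\right)^{s}\left(\frac{\binom{n-s-1}{k}}{\binom{n-1}{k}}\right)^{n-s}\;\le\;\left(\frac{s}{n}\right)^{ks}\left(\frac{n-s}{n}\right)^{k(n-s)},$$
where one uses $\binom{a}{k}/\binom{b}{k}=\prod_{i=0}^{k-1}\frac{a-i}{b-i}\le (a/b)^{k}$ for $0\le a\le b$.

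Then I would apply the union bound $\mathbb{P}(\text{disconnected})\le\sum_{s=k+1}^{n/2}\binom{n}{s}(\cdots)$, bound $\binom{n}{s}\le(en/s)^{s}$, and split the range of $s$ into a ``small'' part $k+1\le s\le\delta n$ and a ``bulk'' part $\delta n<s\le n/2$, where $\delta=\delta(k)>0$ is a small constant. In the small part, discarding the second factor reduces the summand to $a_{s}:=e^{s}(s/n)^{(k-1)s}$; a direct computation gives $a_{s+1}/a_{s}\le e^{k}\big((s+1)/n\big)^{k-1}$, which is at most $\tfrac12$ once $s+1\le\delta n$ for $\delta$ chosen appropriately, so the $a_{s}$ decay geometrically and
$$\sum_{s=k+1}^{\delta n}a_{s}\;\le\;2\,a_{k+1}\;=\;2\,e^{k+1}(k+1)^{(k-1)(k+1)}\,n^{-(k-1)(k+1)}\;\lesssim_{k}\;n^{-(k-1)(k+1)}.$$
In the bulk part, keeping both factors gives $\binom{n}{s}(s/n)^{ks}((n-s)/n)^{k(n-s)}\le\exp\!\big(n\,g(s/n)\big)$ with $g(t)=t+(k-1)t\log t+k(1-t)\log(1-t)$. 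One checks that $g$ is strictly negative on $(0,1/2]$: indeed $g(0^{+})=0$, while $g'(t)=(k-1)\log t-k\log(1-t)$ is strictly increasing with $g'(0^{+})=-\infty$, and $g(1/2)=\tfrac12-(2k-1)\tfrac{\log 2}{2}<0$ for $k\ge2$; hence $g\le-c<0$ on $[\delta,1/2]$, every bulk term is $\le e^{-cn}$, and the bulk sum is $\le n e^{-cn}=o\big(n^{-(k-1)(k+1)}\big)$. Adding the two parts yields $\mathbb{P}(\text{disconnected})\lesssim_{k}n^{-(k-1)(k+1)}$.

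The main obstacle is not any single estimate but organizing the sum so that it is genuinely dominated by its first term: for small $s$ one must keep the exact power of $n$ and check that the summand is decreasing (the geometric-ratio computation), while near $s\sim n/2$ one must retain the factor $((n-s)/n)^{k(n-s)}$ — discarding it there would ruin the bound for small $k$ — and extract exponential decay from the entropy function $g$. The exponent $(k-1)(k+1)=k^{2}-1$ is transparent from the dominant contribution: there are $\binom{n}{k+1}\asymp n^{k+1}$ candidate sets of the minimal admissible size $k+1$, each forming no cross edge with probability $\asymp\binom{n-1}{k}^{-(k+1)}\asymp n^{-k(k+1)}$, and $n^{k+1}\cdot n^{-k(k+1)}=n^{-(k-1)(k+1)}$.
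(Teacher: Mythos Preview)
Your argument is correct and follows the same first-moment/union-bound strategy as the paper: bound the probability that a fixed $s$-set has no cross edges by $(s/n)^{ks}((n-s)/n)^{k(n-s)}$, apply $\binom{n}{s}\le(en/s)^s$, and show the resulting sum is dominated by its first admissible term $s=k+1$ via a ratio comparison. The only substantive difference is in the handling of the tail: the paper splits at $\sqrt{n}$ and estimates $(n/(n-\ell))^{k\ell}$ directly, whereas you split at $\delta n$ and control the range $[\delta n,\,n/2]$ through the convex entropy function $g(t)=t+(k-1)t\log t+k(1-t)\log(1-t)$. Your route is slightly cleaner here, since convexity together with $g(0^{+})=0$ and $g(1/2)<0$ immediately gives $g<0$ on $(0,1/2]$ and hence uniform exponential decay on the bulk, avoiding the more delicate balancing of $(\ell/n)^{(k-1)\ell}$ against $2^{k\ell}$ that the paper performs near $\ell\sim n/2$.
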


\begin{proof} If the graph is disconnected, then we can find
a connected component of $\ell \leq n/2$ points that is not connected to the remaining $n-\ell$ points. For a fixed set of $\ell \geq k+1$ points, the likelihood of
this occurring is
$$ \mathbb{P}(\mbox{fixed set of}~\ell~\mbox{points is disconnected from the rest}) \leq \left(\frac{\ell}{n}\right)^{\ell k} \left(\frac{n-\ell}{n}\right)^{\ell (n-\ell)}.$$
An application of the union bound shows that the likelihood of a graph being disconnected can be bounded from above by
\begin{align*}
\sum_{\ell=k+1}^{n/2} \binom{n}{\ell} \left( \frac{\ell}{n} \right)^{k \ell} \left( \frac{n-\ell}{n} \right)^{2(n-\ell)}  &\leq \sum_{\ell=k+1}^{n/2} \left( \frac{ne}{\ell}\right)^{\ell} \left( \frac{\ell}{n} \right)^{k \ell} \left( \frac{n-\ell}{n} \right)^{k(n-\ell)} \\
&\lesssim  \sum_{\ell=k+1}^{n/2} e^{\ell} \left( \frac{\ell}{n} \right)^{(k-1) \ell} \left( \frac{n-\ell}{n} \right)^{k(n-\ell)} 
\end{align*}
We use that the approximation to $e$ converges from below
$$ \left(1 - \frac{k}{n}\right)^{n} \leq e^{-k}$$
to bound
\begin{align*}
 \sum_{\ell=k+1}^{n/2} e^{\ell} \left( \frac{\ell}{n} \right)^{(k-1) \ell} \left( \frac{n-\ell}{n} \right)^{k(n-\ell)}  &\leq  \sum_{\ell=k+1}^{n/2} e^{\ell} \left( \frac{\ell}{n} \right)^{(k-1) \ell} e^{-k \ell} \left( \frac{n-\ell}{n} \right)^{-k\ell}  \\
&\leq e \sum_{\ell=k+1}^{n/2}  \left( \frac{\ell}{n} \right)^{(k-1) \ell}  \left( \frac{n}{n-\ell} \right)^{k\ell}.
\end{align*}
We use once more the approximation to Euler's number to argue
$$ \left( \frac{n}{n-\ell} \right)^{k\ell} = \left(1 + \frac{\ell}{n-\ell}\right)^{(n-\ell) \frac{k \ell}{n-\ell}} \leq \exp\left(\frac{k \ell^2}{n-\ell}\right).$$
This expression is $\sim 1$ as long as $\ell \lesssim \sqrt{n}$. This suggests splitting the sum as 
$$ \sum_{\ell=k+1}^{n/2}  \left( \frac{\ell}{n} \right)^{(k-1) \ell}  \left( \frac{n}{n-\ell} \right)^{k\ell} \lesssim \sum_{\ell=k+1}^{\sqrt{n}}  \left( \frac{\ell}{n} \right)^{(k-1) \ell} +
\sum_{\ell=\sqrt{n}}^{n/2}  \left( \frac{\ell}{n} \right)^{(k-1) \ell}  \left( \frac{n}{n-\ell} \right)^{k\ell}.$$
We start by analyzing the first sum. We observe that the first term yields exactly the desired asymptotics. We shall show that the remainder of the sum is small by
comparing ratios of consecutive terms
$$ \frac{ \left( \frac{\ell +1}{n}\right)^{(k-1)(\ell+1)}}{  \left( \frac{\ell }{n}\right)^{(k-1)(\ell)}} = \frac{1}{n^{k-1}} \left(1 + \frac{1}{\ell}\right)^{k-1} \leq \frac{1}{n^{k-1}} \left(1 + \frac{1}{k+1}\right)^{k-1} \leq \frac{e}{n^{k-1}} \ll 1.$$
This implies that we can dominate that sum by a geometric series which itself is dominated by its first term. The same argument will now be applied to the second sum. We observe that the same ratio-computation shows
$$ \sum_{\ell=\sqrt{n}}^{n/2}  \left( \frac{\ell}{n} \right)^{(k-1) \ell}  \left( \frac{n}{n-\ell} \right)^{k\ell} \leq \sum_{\ell=\sqrt{n}}^{n/2}  \left( \frac{\ell}{n} \right)^{(k-1) \ell} 2^{k\ell} \lesssim  \left(\frac{1}{\sqrt{n}}\right)^{(k-1)\sqrt{n}} 2^{k \sqrt{n}} \ll \frac{1}{n^{(k-1)(k+1)}}.$$
\end{proof}

\vspace{30pt}

\textbf{Acknowledgement.}  This work was supported by NIH grant 1R01HG008383-01A1 (to GCL and YK), NIH MSTP Training Grant T32GM007205 (to GCL), United States-Israel Binational Science Foundation and the United States National Science Foundation grant no. 2015582 (to GM).

\end{document}